\documentclass[12pt]{article}
\usepackage{amsmath}
\usepackage{amssymb}
\usepackage{amscd}
\usepackage{amsthm}

\theoremstyle{plain}
\newtheorem{Thm}{Theorem}

\newtheorem{Prop}[Thm]{Proposition}
\newtheorem{Cor}[Thm]{Corollary}
\newtheorem{Lem}[Thm]{Lemma}

\theoremstyle{definition}

\theoremstyle{Remark}
\newtheorem{Rem}[Thm]{Remark}
\newtheorem{Que}[Thm]{Question}

\numberwithin{equation}{section}

\title{Derived categories of toric varieties III}
\author{Yujiro Kawamata}

%\subjclass[2000]{14E30}

\begin{document}

\maketitle

\section{Introduction}

This is a continuation of papers \cite{toric} and \cite{toricII}.
We have studied the effect of each elementary birational map in the minimal model program to the derived categories 
in the case of toric pairs.
In this paper we consider problems of more global nature.
We add some cases to the list of affirmative answers to the 
derived McKay correspondence conjecture and its generalization, 
the \lq\lq $K$ implies $D$ conjecture''.

First we remark that the results in \cite{toric} and \cite{toricII} are 
easily extended to the relative situation (Theorem~\ref{relative}).
Then we remark that the derived McKay correspondence holds for any abelian quotient singularity 
in the following sense; 
the derived category defined by a finite abelian group contained in a general linear group 
is semi-orthogonally decomposed 
into derived categories of a relative minimal model and subvarieties of the quotient space 
(Theorem~\ref{abelian}).  
As an application, we prove that such derived McKay correspondence also holds for arbitrary quotient singularities in dimension $2$ (Theorem~\ref{GL2}).

Next we prove that any proper birational morphism between toric pairs which is a $K$-equivalence
is decomposed into a sequence of flops (Theorem~\ref{decomposition}).
This is a generalization, in the toric case, of a theorem in \cite{flop}, which states that 
any birational morphism between minimal models is decomposed into a sequence of flops.
We note that the latter case is easier because the log canonical divisor stays 
at the bottom and cannot be decreased when the models are minimal, 
while it is more difficult to preserve the level of the log canonical divisor in the former case. 

Now we state the theorems in details.
We refer the terminology to the next section.

The following is the reformulation of the results of \cite{toric} and \cite{toricII} to the relative case: 

\begin{Thm}\label{relative}
Let $(X,B)$ and $(Y,C)$ be $\mathbf{Q}$-factorial toric pairs whose coefficients 
belong to the standard set 
$\{1 - 1/m \mid m \in \mathbf{N}\}$, 
let $\tilde X$ and $\tilde Y$ be smooth Deligne-Mumford stacks associated to 
the pairs $(X,B)$ and $(Y,C)$, respectively, and
let $f: X \dashrightarrow Y$ be a toric rational map.
Assume one of the following conditions:

(a) $f$ is the identity morphism, and $B \ge C$.

(b) $f$ is a flip, $C=f_*B$, and $K_X+B \ge K_Y+C$.

(c) $f$ is a divisorial contraction, $C=f_*B$, and $K_X+B \ge K_Y+C$.

(d) $f$ is a divisorial contraction, $C=f_*B$, and $K_X+B \le K_Y+C$.

(e) $f$ is a Mori fiber space and $C$ is determined by $(X,B)$ and $f$ as explained in \cite{toric}. 

Then the following hold:

(a), (b), (c): There are toric closed subvarieties $Z_i$ ($1 \le i \le l$) of $Y$ such that $Z_i \ne Y$ 
for some $l \ge 0$,  
and fully faithful functors $\Phi: D^b(\text{coh}(\tilde Y)) \to D^b(\text{coh}(\tilde X))$
and $\Psi_i: D^b(\text{coh}(\tilde Z_i)) \to D^b(\text{coh}(\tilde X))$
for the smooth Deligne-Mumford stacks $\tilde Z_i$ associated to the $Z_i$ 
such that there is a semi-orthogonal decomposition of triangulated categories 
\[
D^b(\text{coh}(\tilde X)) = \langle \Psi_1(D^b(\text{coh}(\tilde Z_1))), \dots,
\Psi_l(D^b(\text{coh}(\tilde Z_l))), \Phi(D^b(\text{coh}(\tilde Y))) \rangle
\]

(d): There are $Z_i \subset Y$ ($1 \le i \le l$) as in (c), 
and fully faithful functors $\Phi: D^b(\text{coh}(\tilde X)) \to D^b(\text{coh}(\tilde Y))$
and $\Psi_i: D^b(\text{coh}(\tilde Z_i)) \to D^b(\text{coh}(\tilde Y))$
such that there is a semi-orthogonal decomposition
\[
D^b(\text{coh}(\tilde Y)) = \langle \Psi_1(D^b(\text{coh}(\tilde Z_1))), \dots,
\Psi_l(D^b(\text{coh}(\tilde Z_l))), \Phi(D^b(\text{coh}(\tilde X))) \rangle
\]

(e):  There are fully faithful functors $\Phi_j: D^b(\text{coh}(\tilde Y)) \to D^b(\text{coh}(\tilde X))$ 
($1 \le j \le m$) for some $m \ge 2$ and a semi-orthogonal decomposition
\[
D^b(\text{coh}(\tilde X)) = \langle \Phi_1(D^b(\text{coh}(\tilde Y))), \dots, 
\Phi_m(D^b(\text{coh}(\tilde Y))) \rangle
\]

Moreover if $K_X+B = K_Y+C$ in the cases (b) or (c), then $\Phi$ is an equivalence.
\end{Thm}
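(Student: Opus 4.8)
The plan is to reduce each of the relative statements to the absolute, local versions already established in \cite{toric} and \cite{toricII}. The essential point is that a toric rational map $f$ between $\mathbf{Q}$-factorial toric pairs is completely controlled by the combinatorics of the defining fans, and each of the elementary modifications (a)--(e) is obtained from a single change of the cone structure over the base of the relevant contraction. I would first factor $f$ through a common smooth toric Deligne--Mumford stack $\tilde W$ with morphisms $p\colon \tilde W \to \tilde X$ and $q\colon \tilde W \to \tilde Y$, and realize $\Phi$ as the relative Fourier--Mukai transform whose kernel is the structure sheaf of $\tilde W$ twisted by the discrepancy divisor obtained from comparing $K_{\tilde W}$ with the pullbacks of $K_X+B$ and $K_Y+C$. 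The hypotheses $K_X+B\ge K_Y+C$ (respectively $\le$ in case (d)) are precisely the positivity conditions that make this twist effective in the direction needed to force full faithfulness, while the subvarieties $Z_i$ are the toric strata along which $f$ is not an isomorphism, each carrying the stacky structure induced by the pair.

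Because the kernel is a toric object and both full faithfulness and the semi-orthogonality relations are detected by relative $\mathrm{Ext}$ sheaves over the base of the contraction, I would verify all assertions after restricting to the affine toric charts of that base. On each such chart the configuration is exactly the local model analyzed in the two previous papers, so the semi-orthogonal decomposition, the full faithfulness of $\Phi$ and of each $\Psi_i$, the vanishing of $\mathrm{Hom}(\Psi_i(\,\cdot\,),\Psi_j(\,\cdot\,))$ for $i>j$, and the vanishing of $\mathrm{Hom}(\Phi(\,\cdot\,),\Psi_i(\,\cdot\,))$ all follow at once. Since the local functors are defined canonically from the discrepancy divisor on $\tilde W$, they glue to global functors, and the chartwise vanishing of the relative $\mathrm{Ext}$ sheaves yields the orthogonality globally; in case (e) the same mechanism produces the relative Orlov-type decomposition into $m$ copies of $D^b(\mathrm{coh}(\tilde Y))$, the number $m$ being read off from the weights of the toric fibration.

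I expect the only genuine difficulty to lie in the generation step. Orthogonality globalizes formally, but one must still show that the images of the $\Psi_i$ together with $\Phi(D^b(\mathrm{coh}(\tilde Y)))$ span the whole of the relevant target category, namely $D^b(\mathrm{coh}(\tilde X))$ in cases (a)--(c) and (e) and $D^b(\mathrm{coh}(\tilde Y))$ in case (d). I would deduce this from the local spanning statements of \cite{toric} and \cite{toricII} by a descent argument over the affine cover of the base: the count and the combinatorial type of the exceptional strata are constant along the cover, since they are read off from the fan, so the local decompositions patch by a Mayer--Vietoris comparison. Equivalently, one checks that the right orthogonal to the proposed list vanishes, which again reduces to the local case.

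Finally, for the concluding assertion, suppose $K_X+B=K_Y+C$ in case (b) or (c). Then the pullbacks of $K_X+B$ and $K_Y+C$ to $\tilde W$ coincide, so the relative discrepancy of $f$ vanishes identically and the common resolution contributes no strata of positive discrepancy; hence the list of exceptional pieces is empty, that is $l=0$. The decomposition collapses to $D^b(\mathrm{coh}(\tilde X))=\Phi(D^b(\mathrm{coh}(\tilde Y)))$, and a fully faithful functor that is essentially surjective is an equivalence.
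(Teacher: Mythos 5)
Your reduction strategy is essentially how the paper disposes of cases (b)--(e): those elementary maps are local over the base of the associated extremal contraction, and the paper simply states that the proofs of \cite{log crepant}, \cite{toric} and \cite{toricII} go through verbatim in the relative setting. The genuine gap is case (a). Your proposal rests on the claim that ``on each chart the configuration is exactly the local model analyzed in the two previous papers,'' but when $f$ is the identity and $B \ge C$ there is no contraction and no base at all: one must compare two \emph{different} stacky structures $\tilde X$ and $\tilde Y$ on the same underlying variety, and no such local model exists in \cite{toric} or \cite{toricII}. This is precisely the one case the paper excludes from the blanket citation (``the proofs \dots are the same as in \dots except Theorem~\ref{relative}~(a)'') and proves from scratch: reduce to an affine $X$ with simplicial cone generated by $v_1,\dots,v_n$, with $B=\sum(1-1/r_i)B_i$, $C=\sum(1-1/s_i)B_i$, $r_1>s_1$ and $r_i=s_i$ for $i\ge 2$; introduce a third stack $\tilde W$ defined by the sublattice generated by the $t_iv_i$ with $t_1=\text{LCM}(r_1,s_1)$; take $\Phi=p_*q^*$ for the natural morphisms $p:\tilde W\to\tilde X$ and $q:\tilde W\to\tilde Y$, full faithfulness being \cite{log crepant}~Theorem~4.2~(1); compute $\Phi(\mathcal{O}_{\tilde Y}(k\tilde B_1^Y))=\mathcal{O}_{\tilde X}(\llcorner kr_1/s_1\lrcorner\tilde B_1^X)$; and finally identify the semi-orthogonal complement with copies of $D^b(\text{coh}(\tilde B_1))$, where $\tilde B_1$ is the stack of the pair $(B_1,D)$ with the explicitly constructed boundary $D=\sum_{i\ge 2}(1-1/d_ir_i)D_i$, embedded by $p_{2*}p_1^*(-)\otimes\mathcal{O}_{\tilde X}(l\tilde B_1^X)$ for those $l$ not of the form $\llcorner kr_1/s_1\lrcorner$. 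None of this computation is present in, or recoverable from, your chartwise reduction, so case (a) remains unproved in your proposal.

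Two smaller inaccuracies. First, the kernel in all of these constructions is the plain structure sheaf $\mathcal{O}_{\tilde W}$ (the functors are $p_*q^*$ and $q_*p^*$), not a twist by a discrepancy divisor; the $K$-inequality is the hypothesis that makes $p_*q^*$ fully faithful, it is not encoded as an effective twist of the kernel. Second, your argument for the ``Moreover'' clause is acceptable in outline (when $K_X+B=K_Y+C$ the complement vanishes, and a fully faithful essentially surjective exact functor is an equivalence), but the claim that equality forces $l=0$ must be read off from the way the $Z_i$ are produced in the earlier papers --- they come from the divisors where the pullbacks of the two log canonical divisors actually differ --- rather than from a general principle about strata of positive discrepancy on a common resolution.
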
 

We note that, when $f$ is a divisorial contraction, 
the direction of the inclusion of the derived categories 
is unrelated to the direction of the morphism, 
but is the same as the direction of the inequality of the canonical divisors.

If $X$ and $Y$ are smooth and $f$ is a blowing up of a smooth center in the case (b), or if  
$X$ and $Y$ are smooth and $f$ is a standard flip in the case (c), then
it is a theorem by Bondal-Orlov (\cite{BO}).
The derived equivalence in the case of flops (case (c) with $K_X+B = K_Y+C$) was 
proved for general (non-toric) case under the additional assumptions as follows:
if $\dim X = 3$ and $X$ is smooth by Bridgeland (\cite{Bridgeland}),
if $\dim X = 3$ and $X$ has only Gorenstein terminal singularities by Chen (\cite{Chen}) 
and Van den Bergh (\cite{VdBergh}), 
if $\dim X = 3$ and $X$ has only terminal singularities by \cite{DK}, 
and if $X$ is a holomorphic simplectic manifold by Kaledin (\cite{Kaledin}).
There are also interesting results concerning the last case by Cautis (\cite{Cautis}) by using the 
categorification of linear group actions, 
and by Donovan-Segal (\cite{DS}), Ballard-Favero-Katzarkov (\cite{BFK}) and
Halpern-Leistner (\cite{HL}) by using the theory of variations of GIT quotients.

As a corollary we obtain

\begin{Cor}\label{combine}
Let $(X,B)$ be a $\mathbf{Q}$-factorial toric pair whose coefficients belong to the standard set 
$\{1 - 1/m \mid m \in \mathbf{N}\}$.
Assume that there is a projective toric morphism $f: X \to Z$ to another  
$\mathbf{Q}$-factorial toric variety.
Then there exist toric closed subvarieties $Z_i$ ($1 \le i \le m$) of $Z$ 
for some $m \ge 1$ such that there are fully faithful functors 
$\Phi_i: D^b(\text{coh}(\tilde Z_i)) \to D^b(\text{coh}(\tilde X))$ with
a semi-orthogonal decomposition 
\[
D^b(\text{coh}(\tilde X)) \cong \langle \Phi_1(D^b(\text{coh}(\tilde Z_1))), \dots, 
\Phi_m(D^b(\text{coh}(\tilde Z_m))) \rangle
\]
where $\tilde X$ and the $\tilde Z_i$ are smooth Deligne-Mumford stacks associated 
to $(X,B)$ and the $Z_i$. 
\end{Cor}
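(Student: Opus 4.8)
The plan is to factor $f$ by the relative toric minimal model program over $Z$ and to feed each elementary step into Theorem~\ref{relative}, then to assemble the resulting semi-orthogonal decompositions into a single one. Concretely, I would run the $(K_X+B)$-MMP for $(X,B)$ relative to $Z$. Since everything is toric and $\mathbf{Q}$-factorial with coefficients in the standard set, this program terminates after finitely many steps, each of which is a flip or a divisorial contraction that does not increase $K+B$, ending either at a relative minimal model over $Z$ or at a Mori fiber space over $Z$. Write the resulting factorization as $X = X_0 \dashrightarrow X_1 \dashrightarrow \cdots \dashrightarrow X_n$, where $X_n \to Z$ is the terminal object.

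For each step $X_i \dashrightarrow X_{i+1}$ I would invoke the matching case of Theorem~\ref{relative}: a flip falls under (b), a divisorial contraction with $K+B$ decreasing under (c), and the terminal Mori fiber space under (e). In each of these cases the functor points into the derived category of the source, so we obtain $D^b(\text{coh}(\tilde X_i)) = \langle \Psi_{i,1}(D^b(\text{coh}(\tilde W_{i,1}))), \dots, \Phi_i(D^b(\text{coh}(\tilde X_{i+1}))) \rangle$, where the $W_{i,j}$ are proper toric closed subvarieties of $X_{i+1}$ (in case (e) the right-hand pieces are several copies of $\tilde X_{i+1}$). Composing these decompositions from $i=0$ up to $i=n-1$ and using transitivity of semi-orthogonal decompositions, I would obtain a single decomposition of $D^b(\text{coh}(\tilde X))$ whose pieces are $D^b(\text{coh}(\tilde X_n))$ together with the derived categories of the subvarieties $W_{i,j}$, all fully faithfully embedded into $D^b(\text{coh}(\tilde X))$.

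It remains to express every piece in terms of subvarieties of $Z$, which I would do by induction on dimension. Each $W_{i,j}$ is a proper toric subvariety, hence again simplicial and so $\mathbf{Q}$-factorial, carrying an induced projective toric morphism to the toric subvariety $\overline{f(W_{i,j})} \subseteq Z$, which is likewise $\mathbf{Q}$-factorial; since $\dim W_{i,j} < \dim X$, the corollary applied inductively rewrites $D^b(\text{coh}(\tilde W_{i,j}))$ as a decomposition indexed by subvarieties of $Z$. For the terminal object $X_n \to Z$: if it is a Mori fiber space its base has smaller dimension and maps to $Z$, so induction again applies; if it is a relative minimal model that is birational over $Z$, then since $Z$ is $\mathbf{Q}$-factorial a small morphism to $Z$ must be an isomorphism, identifying the last piece with $D^b(\text{coh}(\tilde Z))$. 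Flattening the nested decompositions then yields the asserted decomposition with each $Z_i \subseteq Z$.

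The step I expect to be the main obstacle is guaranteeing that the program can be steered so that only cases (b), (c), (e) occur and the terminal object genuinely descends to $Z$ --- equivalently, that the inequality of log canonical divisors points consistently in the direction that keeps every functor landing inside $D^b(\text{coh}(\tilde X))$, never producing a case (d) divisorial contraction, which would instead embed $D^b(\text{coh}(\tilde X))$ into a larger category and break the stated form. Controlling the direction of $K+B$ relative to $Z$, and in particular ruling out a terminal relative minimal model that contracts a divisor onto $Z$ with negative discrepancy, is the delicate point; handling it should use that one may take $Z$ to be the relative canonical model together with the $\mathbf{Q}$-factoriality of $Z$ to force the terminal birational contraction to be crepant or an isomorphism.
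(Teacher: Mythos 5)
Your skeleton --- run the relative $(K_X+B)$-MMP over $Z$, feed each flip, divisorial contraction and Mori fiber space into Theorem~\ref{relative} (b), (c), (e), handle the lower-dimensional pieces by induction on dimension, and flatten the nested decompositions --- is exactly the argument the paper intends (the paper gives no details at all, only the remark that the proof is "the same as in \cite{log crepant}, \cite{toric} and \cite{toricII}"), and you have correctly isolated the one step this skeleton does not cover: the terminal birational step. But your proposed repair of that step is wrong. $\mathbf{Q}$-factoriality of $Z$ forces a \emph{small} toric morphism onto $Z$ to be an isomorphism, whereas the contractions that survive the relative MMP are \emph{divisorial} ones on which $K+B$ is relatively ample, and these land in the case (d) direction. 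Concretely, take $Z=\mathbf{C}^2/\mu_3$ with weights $(1,1)$, let $X\to Z$ be the minimal resolution (the total space of $\mathcal{O}_{\mathbf{P}^1}(-3)$), and $B=0$. The exceptional curve $E$ satisfies $E^2=-3$ and $K_X\cdot E=1>0$, so $K_X$ is $f$-ample: $X$ is its own relative minimal \emph{and} canonical model over $Z$ (so "taking $Z$ to be the relative canonical model" is not available --- $Z$ is given, and the relative canonical model is $X$, not $Z$), the MMP over $Z$ does nothing, and the leftover morphism is a divisorial contraction with $K_X=f^*K_Z-\frac{1}{3}E<f^*K_Z$, neither crepant nor an isomorphism.

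Moreover, no cleverer run of the MMP can evade this, because every embedding supplied by Theorem~\ref{relative} points from the side with the smaller log canonical divisor into the side with the larger one (as the paper notes right after Theorem~\ref{relative}); hence any chain of such steps starting from $D^b(\text{coh}(\tilde X))$ can only produce a final "main piece" whose log canonical divisor is $\le K_X+B$, while in the example above $K_X< f^*(K_Z+C')$ for \emph{every} effective toric $C'$ on $Z$, so a piece of the form $D^b(\text{coh}(\tilde Z))$ is unreachable. K-theory confirms this is not an artifact of the method: $K_0(X)\cong K_0(\mathbf{P}^1)\cong\mathbf{Z}^2$ by homotopy invariance, while the stack associated to any standard pair $(Z,C')$ is a quotient $[\mathbf{C}^2/G']$ with $|G'|\ge 3$, whose $K_0\cong R(G')$ has rank $\ge 3$ and so cannot be a direct summand of $\mathbf{Z}^2$ coming from a semi-orthogonal decomposition. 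Indeed the true statement in this example goes the opposite way, $D^b(\text{coh}([\mathbf{C}^2/\mu_3]))=\langle E_1, D^b(\text{coh}(X))\rangle$ with $E_1$ exceptional, which is Theorem~\ref{relative}(d) and is also the shape of Theorem~\ref{abelian}, where the decomposed category is that of the quotient stack, i.e.\ of the $K$-largest pair. So the delicate point you flagged is a genuine gap, your fix does not close it, and closing it requires either an additional hypothesis such as $K_X+B\ge f^*(K_Z+f_*B)$ or an input beyond Theorem~\ref{relative}; the paper's one-line proof-by-citation does not display how to close it either.
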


In other words, one can say that the derived category 
$D^b(\text{coh}\tilde X)$ is generated by a {\em relative exceptional collection}.
The proofs of the above statements are the same as in \cite{log crepant}, 
\cite{toric} and \cite{toricII} except Theorem~\ref{relative}~(a).

The following is the derived McKay correspondence for finite abelian groups:

\begin{Thm}\label{abelian}
Let $G \subset GL(n,\mathbf{C})$ be a finite abelian subgroup acting naturally on an affine 
space $\mathbf{A} = \mathbf{C}^n$, and let $X=\mathbf{A}/G$ be the quotient space.
Let $f: Y \to X$ be a $\mathbf{Q}$-factorial terminal relative minimal model of $X$.
Then there exist toric closed subvarieties $Z_i$ ($1 \le i \le m$) of $X$ with $Z_i \ne X$ 
for some $m \ge 0$, with possible repetitions, 
such that there is a semi-orthogonal decomposition 
\[
D^b(\text{coh}([\mathbf{A}/G])) \cong \langle D^b(\text{coh}(\tilde Z_1)), \dots, 
D^b(\text{coh}(\tilde Z_m)), D^b(\text{coh}(\tilde Y)) \rangle
\]
where $\tilde Y$ and the $\tilde Z_i$ are smooth Deligne-Mumford 
stacks associated to $Y$ and the $Z_i$, respectively. 
Moreover if $G \subset SL(n,\mathbf{C})$, then $m=0$.
\end{Thm}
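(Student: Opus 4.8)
The plan is to realize $[\mathbf{A}/G]$ as the smooth Deligne--Mumford stack $\tilde X$ of a $\mathbf{Q}$-factorial toric pair $(X,B)$, and then to connect $\tilde X$ to $\tilde Y$ by a sequence of elementary toric birational maps, applying Theorem~\ref{relative} at each step. First I would use that a finite abelian subgroup $G \subset GL(n,\mathbf{C})$ consists of commuting semisimple elements and is therefore simultaneously diagonalizable; after a linear change of coordinates $G$ lies in the diagonal torus $T$ of $\mathbf{A} = \mathbf{C}^n$. Hence $G$ corresponds to a finite-index overlattice $N \subset N'$ with $N'/N \cong G^\vee$, the quotient $X = \mathbf{A}/G$ is the affine toric variety given by the simplicial cone $\sigma = \langle e_1,\dots,e_n\rangle$ in $N'$---so that $X$ is $\mathbf{Q}$-factorial---and $[\mathbf{A}/G]$ is the toric Deligne--Mumford stack with coarse space $X$ whose stacky ray generators are $e_1,\dots,e_n$. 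Reading off the generic stabilizer along each boundary divisor $\bar D_i$, a cyclic group $\mathbf{Z}/m_i$ coming from the pseudo-reflections of $G$ in that direction, identifies $[\mathbf{A}/G]$ with the stack $\tilde X$ associated to $(X,B)$ for $B = \sum_i (1 - 1/m_i)\bar D_i$, whose coefficients lie in the standard set. If $G$ contains pseudo-reflections it may be convenient first to strip the divisorial part of $B$ by Theorem~\ref{relative}(a).

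Next, the relative minimal model $f : Y \to X$ is a toric variety whose fan $\Sigma_Y$ subdivides $\sigma$, and $\tilde Y$ is its associated smooth stack. I would connect $\tilde X$ and $\tilde Y$ through the toric minimal model program relative to $X$: any two such toric models are joined by a finite chain of flips and divisorial contractions, and I would arrange this chain to be monotone for $K + B$, decreasing toward $\tilde Y$, the model whose log canonical divisor is relatively nef sitting at the bottom. Applying Theorem~\ref{relative}, cases (b) and (c), to each step yields at every stage a fully faithful embedding of the derived category of the lower model together with a semi-orthogonal complement generated by derived categories of stacks $\tilde Z_i$ over proper closed toric subvarieties $Z_i \subsetneq X$. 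Composing these decompositions by transitivity of semi-orthogonal decompositions assembles the asserted $D^b(\mathrm{coh}([\mathbf{A}/G])) = \langle D^b(\mathrm{coh}(\tilde Z_1)), \dots, D^b(\mathrm{coh}(\tilde Z_m)), D^b(\mathrm{coh}(\tilde Y))\rangle$, the repetitions among the $Z_i$ reflecting the junior (positive-age) classes of $G$.

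For the last assertion, if $G \subset SL(n,\mathbf{C})$ then $K_X$ is Gorenstein and the terminal minimal model $f : Y \to X$ is crepant, so that $K_Y + C = f^*(K_X + B)$ and, more to the point, every elementary step in the chain above is a $K$-equivalence, that is, a flop with $K + B$ constant. The \lq\lq moreover'' clause of Theorem~\ref{relative} then makes each $\Phi$ an equivalence, no nontrivial semi-orthogonal complement is ever produced, and one obtains $m = 0$ with an equivalence $D^b(\mathrm{coh}([\mathbf{A}/G])) \cong D^b(\mathrm{coh}(\tilde Y))$, recovering the derived McKay correspondence.

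The main obstacle I anticipate is the second paragraph: choosing the factorization of $\tilde X \dashrightarrow \tilde Y$ so that $K + B$ changes monotonically, with $\tilde Y$ kept at the bottom of the filtration and each peeled-off piece supported on a proper subvariety $Z_i \ne X$. This is precisely the difficulty, flagged in the introduction, of \emph{preserving the level of the log canonical divisor} through divisorial contractions, as opposed to the easier situation of flops between minimal models where $K + B$ automatically stays at the bottom. Controlling the signs of the discrepancies of the extracted divisors along the entire chain, rather than at a single step, is what will require care.
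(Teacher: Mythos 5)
Your first paragraph coincides with the paper's opening step: since $G$ is abelian and hence diagonalizable, the pair $(X,B)$ defined by $p^*(K_X+B)=K_{\mathbf{A}}$ is a $\mathbf{Q}$-factorial toric pair with standard coefficients, and $D^b(\text{coh}([\mathbf{A}/G]))\cong D^b(\text{coh}(\tilde X))$. But the rest of the proposal has a genuine gap exactly at the point you defer as ``the main obstacle'': you never construct the chain connecting $\tilde X$ to $\tilde Y$, and the cases of Theorem~\ref{relative} you propose to apply to its steps are the wrong ones. Since $Y$ dominates $X$, any chain of elementary toric maps from $X$ to $Y$ must contain divisorial \emph{extractions}; these are governed by case (d) --- the divisorial contraction case in which the inclusion of derived categories goes against the direction of the morphism --- not by cases (b) and (c), and the disposal of the boundary requires case (a), which is precisely the one case whose proof is new in this paper and is proved in the same section for this purpose. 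The paper makes the monotonicity you worry about automatic by a two-stage construction. Stage one: extract, one at a time, divisors of non-positive discrepancy, giving each model the \emph{strict-transform} boundary; every step is then a divisorial contraction $f_i\colon (X_i,B_i)\to (X_{i-1},B_{i-1})$ with $B_i=f_{i*}^{-1}B_{i-1}$ and $K_{X_i}+B_i\le K_{X_{i-1}}+B_{i-1}$, i.e.\ case (d), ending at a terminal pair $(X_t,B_t)$; then case (a) strips the residual boundary $B_t$. Stage two: run the toric MMP for $K_{X_t}$ over $X$, whose steps are flips and divisorial contractions with $K_{Y_{j-1}}>K_{Y_j}$, i.e.\ cases (b) and (c), terminating in a relative minimal model. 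In this construction each step decreases the (log) canonical divisor by its very definition, so no global ``control of the signs of the discrepancies along the entire chain'' is needed.

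Two further points are missing. First, the theorem concerns an \emph{arbitrary} $\mathbf{Q}$-factorial terminal relative minimal model $Y$, and you simply assert that $Y$ is toric; the paper justifies this via \cite{flop}: a toric relative minimal model exists, any other relative minimal model is connected to it by a sequence of flops, that sequence is automatically toric because the extremal rays are toric, and flops give equivalences by the ``moreover'' clause of Theorem~\ref{relative}, so it suffices to treat one toric model. Second, the semi-orthogonal complements produced at each step of the chain are supported on toric subvarieties of the \emph{intermediate} models, not of $X$; the paper applies Corollary~\ref{combine} to decompose these further into categories over toric closed subvarieties of $X$ itself, as the statement requires. Your argument for the case $G\subset SL(n,\mathbf{C})$ (crepancy forces every step to be a $K$-equivalence, so the ``moreover'' clause yields equivalences and $m=0$) is in substance the paper's, once the chain has actually been built.
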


In the case of dimension $2$, we do not need the assumption that the group is abelian:

\begin{Thm}\label{GL2}
Let $G \subset GL(2,\mathbf{C})$ be a finite subgroup acting naturally on an affine 
space $\mathbf{A} = \mathbf{C}^2$, let $X = \mathbf{A}/G$, and
let $f: Y \to X$ be the minimal resolution.
Then there exist smooth closed subvarieties $Z_i$ ($1 \le i \le m$) of $X$ with $Z_i \ne X$ 
for some $m \ge 0$, with possible repetitions, 
such that there is a semi-orthogonal decomposition 
\[
D^b(\text{coh}([\mathbf{A}/G])) \cong \langle D^b(\text{coh}(Z_1)), \dots, D^b(\text{coh}(Z_m)),
D^b(\text{coh}(Y)) \rangle.
\]
Moreover if there are no quasi-reflections in $G$, the elements whose invariant 
subspaces are of codimension $1$, then $\dim Z_i = 0$ for all $i$.
Thus the semi-orthogonal complement of $D^b(\text{coh}(Y))$
in $D^b(\text{coh}([\mathbf{A}/G]))$ is generated by an exceptional collection in this case.
\end{Thm}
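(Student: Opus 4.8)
The plan is to bootstrap from the abelian case, Theorem~\ref{abelian}, by induction on the order $|G|$, transporting a semi-orthogonal decomposition through an intermediate quotient by a normal abelian subgroup. First I would record the reductions special to dimension two: a $\mathbf{Q}$-factorial terminal surface is smooth, and for the minimal resolution $f\colon Y \to X$ each exceptional curve $E_i$ is a smooth rational curve with $E_i^2 \le -2$, so that $K_Y \cdot E_i = -E_i^2 - 2 \ge 0$; thus $K_Y$ is $f$-nef and $Y$ is exactly the relative minimal model appearing in Theorem~\ref{abelian}, with its associated smooth stack $\tilde Y$ equal to the smooth surface $Y$ itself. Consequently, when $G$ is abelian the assertion is the $n=2$ specialization of Theorem~\ref{abelian}, once one checks that the minimal resolution has already resolved all stabilizers, so that the associated stacks $\tilde Z_i$ and $\tilde Y$ are the underlying varieties $Z_i$ and $Y$ and no residual gerbe survives. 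In particular $m=0$ for $G \subset SL(2,\mathbf{C})$ is inherited directly.

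For the inductive step, suppose $G$ is non-abelian. A minimal normal subgroup of a finite group is characteristically simple, and since it inherits a faithful two-dimensional representation from $G \subset GL(2,\mathbf{C})$ it cannot be a product of non-abelian simple groups; hence it is elementary abelian. This gives a proper nontrivial normal abelian subgroup $A \trianglelefteq G$, and I set $\Gamma = G/A$, so that $[\mathbf{A}/G] \cong [\,[\mathbf{A}/A]/\Gamma\,]$ and $D^b(\text{coh}([\mathbf{A}/G]))$ is the $\Gamma$-equivariant derived category of the toric Deligne--Mumford stack $[\mathbf{A}/A]$. The group $\Gamma$ acts on $[\mathbf{A}/A]$, on its minimal model $Y_A$, and on the toric strata producing the $Z_i$ of Theorem~\ref{abelian}. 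The central step is to arrange the decomposition of $D^b(\text{coh}([\mathbf{A}/A]))$ furnished by Theorem~\ref{abelian} so that $\Gamma$ permutes its components, and then to descend it to the equivariant category; the orbits of components then yield a semi-orthogonal decomposition of $D^b(\text{coh}([\mathbf{A}/G]))$ whose pieces are the $[\,\cdot\,/\Gamma_i\,]$-quotients of $D^b(\text{coh}(\tilde Y_A))$ and of the $D^b(\text{coh}(\tilde Z_i))$, where each $\Gamma_i$ is the $\Gamma$-stabilizer of the relevant stratum; repetitions over an orbit account for the phrase ``with possible repetitions''.

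It remains to identify the pieces. The equivariant smooth piece is $D^b(\text{coh}([Y_A/\Gamma]))$; here $\Gamma$ acts by a finite group on the smooth surface $Y_A$, and linearizing at a fixed point exhibits the local germ as a finite $GL(2,\mathbf{C})$-quotient whose group is a point-stabilizer of strictly smaller order than $|G|$ (since $|A|\ge 2$). The inductive hypothesis therefore applies locally, and gluing these local decompositions, together with the relative statements of Theorem~\ref{relative} and Corollary~\ref{combine} used to pass between $[Y_A/\Gamma]$ and the minimal resolution $Y$ of $X$ (unique in dimension two), replaces $D^b(\text{coh}([Y_A/\Gamma]))$ by $D^b(\text{coh}(Y))$ together with further zero- or one-dimensional pieces. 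Each descended stratum piece is likewise a lower-order quotient situation handled by induction, and assembling everything produces the claimed $Z_i \subsetneq X$.

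Finally I would track dimensions for the \emph{moreover} statement. The codimension-one stacky structure of $[\mathbf{A}/G]$ arises precisely from the quasi-reflections: their fixed lines in $\mathbf{C}^2$ have one-dimensional images, which are the $Z_i$ with $\dim Z_i = 1$. If $G$ contains no quasi-reflections, then every nontrivial element fixes only the origin, so $G$ acts freely on $\mathbf{A}\setminus\{0\}$, the only stacky locus is the origin, and every stratum $Z_i$ is a point; hence the semi-orthogonal complement of $D^b(\text{coh}(Y))$ is generated by an exceptional collection. The main obstacle I anticipate is the equivariance of the central descent: one must guarantee that the functors and subcategories of Theorem~\ref{abelian} can be chosen so that $\Gamma$ genuinely permutes them, with no obstruction from the $\Gamma$-action on the chosen toric generators, and one must match the descended smooth piece with the true minimal resolution $Y$ rather than merely with $[Y_A/\Gamma]$. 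This matching, where the relative results of Theorem~\ref{relative} and the local-to-global gluing do the real work, is where I expect the technical weight of the argument to lie.
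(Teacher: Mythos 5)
Your proposal has a genuine gap at its central step: the claim that the semi-orthogonal decomposition of $D^b(\text{coh}([\mathbf{A}/A]))$ furnished by Theorem~\ref{abelian} can be ``arranged so that $\Gamma = G/A$ permutes its components.'' The decomposition of Theorem~\ref{abelian} is built from a \emph{toric} MMP, and its pieces are derived categories of \emph{toric} subvarieties, together with functors coming from the toric results of Theorem~\ref{relative}. But $\Gamma$ need not act torically on $\mathbf{A}/A$ at all. Take $G$ to be the binary tetrahedral group in $SL(2,\mathbf{C})$: its unique minimal normal subgroup is $A = \{\pm 1\}$, so your induction forces $\Gamma \cong A_4$ acting on the $A_1$-singularity $\mathbf{A}/\{\pm 1\}$. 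Since the finite subgroups of the normalizer of a torus in automorphisms of this surface are cyclic or dihedral, $A_4$ cannot preserve any toric structure, so there is no way to choose the toric data underlying Theorem~\ref{abelian} to be $\Gamma$-stable; and even when the subcategories happen to be permuted, descending a semi-orthogonal decomposition to the equivariant category requires compatibility of the \emph{functors} (the kernels) with the group action, which nothing in the paper supplies for the toric functors. A second, independent problem is the inductive step's ``gluing of local decompositions'' at fixed points of $\Gamma$ on $Y_A$: semi-orthogonal decompositions constructed on local germs do not glue to a global one, so this step is not a valid operation.

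The paper avoids both problems by choosing a different normal subgroup: $H = G \cap SL(2,\mathbf{C})$, which is normal with \emph{cyclic} quotient $\mathbf{Z}_r = G/H$. Bridgeland--King--Reid applies to any finite subgroup of $SL(2,\mathbf{C})$, abelian or not, and gives a canonical crepant resolution $M$ (the Hilbert scheme of $H$-clusters) on which $G$ acts through $\mathbf{Z}_r$, with a $G$-invariant universal subscheme $W$ as Fourier--Mukai kernel. Equivariantizing an \emph{equivalence} with a canonical equivariant kernel is easy (one just takes $\mathbf{Z}_r$-invariants of Hom spaces), in contrast to equivariantizing a toric semi-orthogonal decomposition. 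The residual quotient $M/\mathbf{Z}_r$ is then a cyclic quotient of a smooth surface, so it has only abelian quotient singularities, and the morphisms from the minimal resolution $Y'$ of $M/\mathbf{Z}_r$ and the contraction $Y' \to Y$ are toroidal; this is exactly the setting where Theorem~\ref{relative} applies, yielding the embeddings $D^b(\text{coh}(\tilde Y)) \to D^b(\text{coh}(\tilde Y')) \to D^b(\text{coh}([M/\mathbf{Z}_r])) = D^b(\text{coh}([\mathbf{A}/G]))$ with complements generated by derived categories of subvarieties. In short, the theorem is not proved by induction on $|G|$ through an abelian normal subgroup, but by a single two-stage dévissage $G \supset H \supset 1$ engineered so that the non-abelian part is handled by BKR and only the cyclic (hence toric) part is handled by the toric machinery; your choice of $A$ puts the non-toric group on the wrong side of the dévissage.
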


The derived McKay correspondence was already proved for finite subgroups of 
$SL(2,\mathbf{C})$, $SL(3,\mathbf{C})$ (Bridgeland-King-Reid \cite{BKR}), 
and $Sp(2n,\mathbf{C})$ (Bezrukavnikov-Kaledin \cite{BK}).

Finally, we prove the following decomposition theorem for $K$-equivalent toric birational map:

\begin{Thm}\label{decomposition}
Let $f: X \dashrightarrow Y$ be a toric birational map between
projective $\mathbf{Q}$-factorial toric varieties which is an isomorphism in codimension $1$, and
let $B$ be a toric $\mathbf{R}$-divisor on $X$
whose coefficients belong to the interval $(0,1)$, and let $f_*B=C$.
Assume that $K_X+B = K_Y+C$.
Then the birational map $f: (X,B) \dashrightarrow (Y,C)$ is decomposed into a 
sequence of flops.
\end{Thm}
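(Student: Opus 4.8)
The plan is to translate the statement into the language of fans and piecewise-linear support functions, to read the $K$-equivalence as saying that $\Delta_X$ and $\Delta_Y$ are two simplicial subdivisions of one common coarse fan on which $K+B$ is linear, and then to manufacture the flops by a relative minimal model program over the cones of that coarse fan. First I would fix the lattice $N$ and the complete simplicial fans $\Delta_X,\Delta_Y\subset N_\mathbf{R}$ of $X$ and $Y$. Since $f$ is a toric isomorphism in codimension $1$, the two fans share the same set of rays; writing $v_i$ for the primitive generators and $D_i$ for the invariant divisors, we have $B=\sum b_i D_i$ and $C=\sum b_i(f_*D_i)$ with $b_i\in(0,1)$. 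Let $\psi_X$ (resp. $\psi_Y$) be the piecewise-linear function linear on each maximal cone of $\Delta_X$ (resp. $\Delta_Y$) representing $K_X+B$ (resp. $K_Y+C$), normalised so that $\psi_X(v_i)=1-b_i\in(0,1)$. The hypothesis $K_X+B=K_Y+C$ says precisely that, after pulling back to a common smooth subdivision, $\psi_X=\psi_Y=:\psi$ as functions on $N_\mathbf{R}$.

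Next I would introduce the coarsest fan $\Delta_0$ on which $\psi$ is linear on every cone, namely its domains of linearity. By construction both $\Delta_X$ and $\Delta_Y$ refine $\Delta_0$, they share the ray set $\{v_i\}$, and each $v_i$ lies on the graph of $\psi$ with $\psi(v_i)=1-b_i\neq 0$, which keeps every ray essential and the pair klt. For a maximal cone $\delta\in\Delta_0$ the restriction $\psi|_\delta$ is linear, so $K+B$ is the pullback of a Cartier divisor from the affine toric variety $U_\delta$ attached to $\delta$; in particular $K+B$ is numerically trivial over $U_\delta$. This is the crucial consequence of the $K$-equivalence: any $\mathbf{Q}$-factorial toric small modification of $U_\delta$ refining $\Delta_0$ is crepant over $U_\delta$, so every relative flip over $U_\delta$ is automatically a flop. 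The theorem is thereby reduced to connecting $\Delta_X$ and $\Delta_Y$ by a sequence of such relative flips, worked out cone by cone over $\Delta_0$.

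To produce the flops, I would fix a maximal $\delta\in\Delta_0$ and restrict attention to the rays lying in $\delta$. Choosing ample classes of $X$ and $Y$ gives strictly convex support functions whose restrictions $\ell_X|_\delta$ and $\ell_Y|_\delta$ are strictly convex and induce the triangulations $\Delta_X|_\delta$ and $\Delta_Y|_\delta$, both using all rays in $\delta$. These are two interior chambers of the relative movable cone of $U_\delta$, which is convex; moving along the perturbed segment $(1-t)\ell_X|_\delta+t\ell_Y|_\delta$ one stays in the interior of the movable cone, so no ray is dropped and each wall crossed is a genuine flip, and the walls are met transversally one at a time. By the previous paragraph each such flip is crepant over $U_\delta$, hence a flop. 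Performing these flops for one maximal cone of $\Delta_0$ after another—the modifications over distinct maximal cones occur over disjoint loci and so do not interfere—carries $(X,B)$ to $(Y,C)$ through a sequence of flops, as required.

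The hard part will be to secure that the interpolating path can be kept simultaneously inside the interior of the relative movable cone, so that every elementary transformation is an isomorphism in codimension $1$ (a flip rather than a divisorial contraction), and inside the refinements of $\Delta_0$, so that $\psi$ and with it $K+B$ is preserved and every flip is genuinely a flop. Convexity of the relative movable cone over $U_\delta$ disposes of the first requirement, since a segment between two interior points stays interior; the second is automatic because $\psi$ is linear on $\delta$, making the entire process over $U_\delta$ trivial for $K+B$. What must be verified with care is that, after a generic perturbation, the straight-line path meets the walls of the chamber decomposition only in codimension one, so that the transformations are elementary; this is the standard transversality argument for the secondary-fan (movable-cone) decomposition, and it is the one technical point I expect to occupy the proof.
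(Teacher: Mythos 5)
The decisive gap is in your second paragraph, where you introduce ``the coarsest fan $\Delta_0$ on which $\psi$ is linear, namely its domains of linearity'' and assert that both $\Delta_X$ and $\Delta_Y$ refine it. Since $K_X+B$ is not assumed nef (nor anti-nef), $\psi$ is not a convex piecewise linear function, and the maximal domains of linearity of a non-convex PL function need not be convex cones; for instance, one can arrange $\psi$ to be linear exactly on the cone over a non-convex star-shaped polygon whose boundary walls all carry genuine bends, in a configuration satisfying every hypothesis of the theorem. In that situation the ``domains of linearity'' are not cones of any fan, there is no affine toric variety $U_\delta$, and the entire cone-by-cone relative MMP that follows has no base to run over. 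What your argument actually requires is the existence of \emph{some} common coarsening $\Delta_0$ by convex cones, refined by both fans, with $\psi$ linear on each cone --- equivalently, a common contraction $X\to W_0\leftarrow Y$ such that $K_X+B$ is the pullback of an $\mathbf{R}$-Cartier class from $W_0$. This is a strong structural statement (it fails for convex PL functions' naive linearity domains, and for compositions of flops over different bases there is in general no single common base); it is exactly the hard point, it is nowhere proved in your proposal, and the supporting-hyperplane argument that makes ``linearity domains form a fan'' work in the convex case is unavailable here. The remaining steps (the secondary-fan wall crossing over an affine base, crepancy of each flip when $K+B$ is numerically trivial over the base) are standard and fine, but they are all downstream of the unproved existence of $W_0$.

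The paper avoids this issue entirely by never seeking a common base. It fixes an ample effective $L_Y$ on $Y$ with $L_Y-(K_Y+C)$ ample, sets $L_X=f_*^{-1}L_Y$, and proves a general criterion (Lemma~\ref{condition}): if every curve class generating $\overline{NE}(X)$ on which $L_X$ is negative is $(K_X+B)$-trivial, then running the MMP for $(X,B+\epsilon L_X)$ produces only $(K+B)$-flops and terminates by \cite{BCHM}, ending at $Y$. The toric input is then purely local and wall-by-wall: if $(L_X,l_w)\le 0$ for a wall $w$ of $\Delta_X$, a convexity comparison of the PL functions of $L_X$ and $L_Y$ shows $w$ cannot be a wall of $\Delta_Y$, and any wall of $\Delta_X$ absent from $\Delta_Y$ has $((K_X+B),l_w)=0$ because $\psi$ is linear across it. This verifies the criterion on $X$ and on every intermediate model (which is again toric with the same rays and still $K$-equivalent to $Y$), with no global coarsening of the fans ever needed. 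If you want to salvage your approach, you would first have to prove the existence of the common crepant contraction $W_0$ in the toric setting --- a statement that is itself at least as strong as the theorem and, to my knowledge, not known by a direct argument.
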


As a corollary of Theorems~\ref{relative} and \ref{decomposition}, 
we obtain an affirmative answer to 
\lq\lq $K$ implies $D$ conjecture'' in the toric case:

\begin{Cor}
Assume additionally that the coefficients of $B$ belong to a set
$\{1 - 1/m \mid m \in \mathbf{Z}_{>0}\}$.
Let $\tilde X$ and $\tilde Y$ be the smooth Deligne-Mumford staks 
associated to the pairs $(X,B)$ and $(Y,C)$, respectively.
Then there is an equivalence of triangulated categories
$D^b(\text{coh}(\tilde X)) \cong D^b(\text{coh}(\tilde Y))$.
\end{Cor}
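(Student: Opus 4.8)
The plan is to combine the geometric decomposition of Theorem~\ref{decomposition} with the categorical input of Theorem~\ref{relative}. Since $f\colon (X,B)\dashrightarrow (Y,C)$ is a $K$-equivalent toric birational map that is an isomorphism in codimension $1$, Theorem~\ref{decomposition} produces a factorization
\[
(X,B) = (X_0,B_0) \dashrightarrow (X_1,B_1) \dashrightarrow \cdots \dashrightarrow (X_r,B_r) = (Y,C)
\]
into a sequence of flops, where each $X_i$ is a projective $\mathbf{Q}$-factorial toric variety, $B_{i+1}$ is the strict transform of $B_i$, and each elementary step $g_i\colon X_i\dashrightarrow X_{i+1}$ is an isomorphism in codimension $1$ with $K_{X_i}+B_i = K_{X_{i+1}}+B_{i+1}$. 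My goal is to upgrade each step to a derived equivalence and then compose.

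First I would verify that the hypotheses of Theorem~\ref{relative} hold at every stage. Because every map $X \dashrightarrow X_i$ is an isomorphism in codimension $1$, the prime toric divisors of $X_i$ are in canonical bijection with those of $X$, and $B_i$ is obtained from $B$ with unchanged coefficients; hence all coefficients remain in the standard set $\{1-1/m\}$. Consequently each pair $(X_i,B_i)$ determines a smooth Deligne--Mumford stack $\tilde X_i$, with $\tilde X_0 = \tilde X$ and $\tilde X_r = \tilde Y$. Each $g_i$ is a small modification with the log canonical divisors equal, so it falls under case (b) of Theorem~\ref{relative} in the equality regime; the \lq\lq moreover'' clause then supplies a functor $\Phi_i$ which is in fact an equivalence $D^b(\text{coh}(\tilde X_{i+1}))\xrightarrow{\sim} D^b(\text{coh}(\tilde X_i))$. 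Since equality holds in both directions, there is no ambiguity about orientation, and composing $\Phi_0\circ\cdots\circ\Phi_{r-1}$ yields the desired equivalence $D^b(\text{coh}(\tilde X))\cong D^b(\text{coh}(\tilde Y))$.

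The point requiring care is the identification of each elementary flop $g_i$ with an instance of case (b) at the boundary value of the inequality. A flop is $K+B$-trivial over its flopping contraction $X_i\to W_i\leftarrow X_{i+1}$, so neither direction is a flip in the strict sense of decreasing $K+B$; what must be checked is that the constructions of \cite{toric} and \cite{toricII} underlying Theorem~\ref{relative}(b) go through at $K_{X_i}+B_i = K_{X_{i+1}}+B_{i+1}$ and produce a genuine equivalence rather than a proper semi-orthogonal decomposition. Since the inclusion is governed by the sign of the difference of log canonical divisors, equality forces the length $l$ of the complementary collection $\Psi_j(D^b(\text{coh}(\tilde Z_j)))$ to be zero, leaving only $\Phi_i$; this is precisely the final clause of Theorem~\ref{relative}, and confirming that the flopping walls furnished by Theorem~\ref{decomposition} are exactly the walls to which that clause applies is the crux of the argument. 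The compatibility of the intermediate stacky models $\tilde X_i$ across successive flops is then routine once this identification is in place.
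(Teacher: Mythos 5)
Your proposal is correct and takes essentially the same route as the paper, which obtains this corollary immediately from Theorem~\ref{decomposition} (decompose $f$ into toric flops, noting that the standard coefficients and the stacks $\tilde X_i$ persist because each step is an isomorphism in codimension $1$) followed by the equality clause of Theorem~\ref{relative}~(b) applied to each flop, composing the resulting equivalences. Your final paragraph is unnecessary caution: the \lq\lq moreover'' clause is part of the statement of Theorem~\ref{relative}, so once each flop is recognized as case (b) with $K_{X_i}+B_i = K_{X_{i+1}}+B_{i+1}$, the equivalence is given and nothing further needs to be verified.
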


%%%%%%%%%%%%%%%%%%%%%%%%%%%%%%%%%%%%%%%%%%%%%
\section{Preliminaries}

We fix the terminology in this section.
A pair $(X,B)$ consisting of a normal algebraic variety 
and an $\mathbf{R}$-divisor
is said to be {\em KLT} (resp. {\em terminal}) 
if there is a projective birational morphism 
$p: Z \to X$ from a smooth varity with an $\mathbf{R}$-divisor $D$ 
whose support is a normal crossing divisor such that 
an equality $p^*(K_X+B)=K_Z+D$ holds 
and all the coefficients of $D$ are smaller than $1$
(resp. all the coefficients of $D - p^{-1}_*B$ are smaller than $0$),
where the canonical divisors $K_X$ and $K_Z$ are defined by using the 
same rational differential forms.
It is called {\em $\mathbf{Q}$-factorial} if any prime divisor on $X$ is a 
$\mathbf{Q}$-Cartier divisor.
 
A birational map $h: X \dashrightarrow Y$ between varieties is said to be {\em proper}
if there exists a third variety $Z$ with proper birational morphisms $f: Z \to X$ and 
$g: Z \to Y$ such that $g = h \circ f$. 

Let $f: (X,B) \dashrightarrow (Y,C)$ be a proper birational map between KLT pairs.
We say that there is an {\em equality of log canonical divisors}
$K_X+B = K_Y+C$
(resp. {\em an inequality} $K_X+B \ge K_Y+C$), 
if $p^*(K_X+B) = q^*(K_Y+C)$
(resp. $p^*(K_X+B) \ge q^*(K_Y+C)$) 
as an ineqality (resp. an equality) of $\mathbf{R}$-divisors for some $p,q$ as above.
We note that such equality or inequality can be defined only 
when the birational map 
$f$ is fixed, and the definition is independent of the choice of $p,q$.
Such an equality (resp. an inequality) is called a {\em $K$-equivalence} 
(resp. {\em $K$-inequality}).
$f$ is also said to {\em crepant} if it is a $K$-equivalence.

A proper birational map $f: X \dashrightarrow Y$ is said to be 
{\em isomorphic in codimension $1$}
(resp. {\em surjective in codimension $1$}) if it induces a bijection
(resp. surjection) between the sets of prime divisors.

A {\em flop} (resp. {\em flip}) is a proper birational map
$f: (X,B) \dashrightarrow (Y,C)$ between 
$\mathbf{Q}$-factorial KLT pairs satisfying the following conditions: 
$f_*B=C$, $K_X+B=K_Y+C$ (resp. $K_X+B > K_Y+C$), and 
there exist projective birational morphisms 
$s: X \to W$ and $t: Y \to W$ to a normal variety 
which are isomorphisms in codimension $1$ and the relative Picard numbers 
$\rho(X/W)$ and $\rho(Y/W)$ are equal to $1$.

A {\em divisorial contraction} is a projective 
birational morphism $f: (X,B) \to (Y,C)$ between 
$\mathbf{Q}$-factorial KLT pairs satisfying the following conditions: 
$f_*B=C$, and the exceptional locus of $f$ consists of a single prime divisor.
We have always $\rho(X/Y)=1$, and 
we have either $K_X+B=K_Y+C$, $K_X+B > K_Y+C$, or $K_X+B < K_Y+C$.
The inverse birational map $f^{-1}: (Y,C) \dashrightarrow (X,B)$ of a 
divisorial contraction is called a {\em divisorial extraction}.

A {\em Mori fiber space } is a projective morphism $f: (X,B) \to Y$ from a  
$\mathbf{Q}$-factorial KLT pair to a normal variety satisfying the following conditions: 
$\rho(X/Y)=1$, $-(K_X+B)$ is ample, and $\dim X > \dim Y$.

For a $\mathbf{Q}$-factorial KLT pair $(X,B)$, there exists a sequence of 
crepat divisorial contractions 
\[
(Y,C)=(X_m,B_m) \to \dots \to (X_1,B_1) \to (X_0,B_0)=(X,B)
\]
such that $(Y,C)$ is terminal and $\mathbf{Q}$-factorial (\cite{BCHM}).
The composition $f: (Y,C) \to (X,B)$ is called a {\em $\mathbf{Q}$-factorial crepant terminalization} 
of the pair $(X,B)$.
We note that even if $B$ has standard coefficients, $C$ may not be so.

Let $f: (Y,C) \to (X,B)$ be a projective birational morphism
from a $\mathbf{Q}$-factorial terminal pair to a KLT pair. 
It is called a {\em $\mathbf{Q}$-factorial terminal relative minimal model} 
if $C=f_*^{-1}B$ and $K_Y+C$ is relatively nef over $X$.
We note that $f$ is not necessarily crepant.
The existence of such a model is also proved in \cite{BCHM}.

A rational number is said to be {\em standard} if it is of the form  
$1-1/m$ for a positive integer $m$.
A KLT pair $(X,B)$ is said to be of {\em quotient type}, if 
there is a quasi-finite surjective morphism $\pi': X' \to X$ from a 
smooth variety $X'$, which is not necessarily irreducible, such that 
$K_{X'}=(\pi')^*(K_X+B)$.
In this case $B$ has only standard coefficients, and 
$X$ is automatically $\mathbf{Q}$-factorial.
If the pair $(X,B)$ is toric, $X$ is $\mathbf{Q}$-factorial, and if 
$B$ has standard coefficients, then it is of quotient type.

There is a {\em smooth Deligne-Mumford stack} $\tilde X$ 
associated to the pair $(X,B)$ of quotient type in such a way that
a sheaf $\mathcal{F}$ on $\tilde X$ is nothing but a sheaf $\mathcal{F}'$ on $X'$  
with an isomorphism $p_1^*\mathcal{F}' \cong p_2^*\mathcal{F}'$ satisfying the cocycle condition, 
where $p_i: X' \times_X X' \to X'$ ($i=1,2$) are projections.
In this case, there is a finite birational morphism 
$\pi: \tilde X \to X$ such that $\pi^*(K_X+B)=K_{\tilde X}$.

For an algebraic variety or a Deligne-Mumford stack $X$, 
let $D^b(\text{coh}(X))$ 
denotes the bounded derived category of coherent sheaves on $X$.
There are variants of this notation; if a finite group $G$ acts on $X$, then
$D^b(\text{coh}^G(X))$ 
denotes the bounded derived category of $G$-equivariant coherent 
sheaves on $X$.
Thus $D^b(\text{coh}^G(X)) \cong D^b(\text{coh}([X/G]))$.

A triangulated category $\mathcal{T}$ is said to have a {\em semi-orthogonal decomposition}
$\mathcal{T} = \langle \mathcal{T}_1, \dots, \mathcal{T}_m \rangle$ by 
triangulated full subcategories $\mathcal{T}_i$ if the 
following conditions are satisfied:
(a) $\text{Hom}(a,b) = 0$ for $a \in \mathcal{T}_i$ and $b \in \mathcal{T}_j$ if $i > j$, and 
(b) for any object $a \in \mathcal{T}$ there is a sequence of objects $a_i \in \mathcal{T}$ 
and $b_i \in \mathcal{T}_i$ for $1 \le i \le m$ such that 
$a=a_1$ and there are distinguished triangles $a_{i+1} \to a_i \to b_i$, where
$a_{m+1}=0$.
In particular, if the $\mathcal{T}_i$ are equivalent to $D^b(\text{Spec }k)$ for all $i$, then 
$\mathcal{T}$ is said to have a {\em full exceptional collection}.

A {\em toric pair} $(X,B)$ consists of a toric variety and an $\mathbf{R}$-divisor 
whose irreducible components are invariant under the torus action.
$X$ is $\mathbf{Q}$-factorial if and only if the corresponding fan is simplicial.
In this case it has only abelian quotient singularities.
Conversely, the quotient of an affine space by any finite abelian group is a 
$\mathbf{Q}$-factorial toric variety. 
Any $\mathbf{Q}$-factorial crepant terminalization and 
$\mathbf{Q}$-factorial terminal relative minimal model of a toric pair is toric.

For a pair of quotient type, we can define a derived category whose homological dimension is finite.
Therefore we would like to ask the following:

\begin{Que}\label{quotient-flop}
Let $f: (X,B) \dashrightarrow (Y,C)$ be a flop or a flip.
If $(X,B)$ is of quotient type, then is $(Y,C)$ too?
\end{Que}

We note that a divisorial contraction of a smooth $3$-fold may produce a 
singularity of non-quotient type even if it is crepant.
We note also that even if $X$ is smooth and $f$ is a flop, $Y$ is not necessarily smooth, but we have
still the derived equivalence in some examples when we consider the associated 
Deligne-Mumford stacks (cf. \cite{Francia}).

The following lemma is standard:

\begin{Lem}\label{codimension}
Let $f: (X,B) \dashrightarrow (Y,C)$ be a proper birational map between 
$\mathbf{Q}$-factorial terminal pairs, and assume that
$K_X+B \ge K_Y+C$. 
Then $f$ is surjective in codimension $1$, and $f_*B \ge C$.
Moreover, if $K_X+B = K_Y+C$, then $f$ is an isomorphism in 
codimension $1$.
\end{Lem}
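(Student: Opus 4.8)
The plan is to compare coefficients of pulled-back log canonical divisors on a common resolution. First I would choose a smooth common log resolution $W$ with projective birational morphisms $\mu: W \to X$ and $\nu: W \to Y$ satisfying $\nu = f \circ \mu$, chosen high enough that all the exceptional and boundary divisors involved form a simple normal crossing divisor. Writing $\mu^*(K_X+B) = K_W + \sum_E \alpha_E E$ and $\nu^*(K_Y+C) = K_W + \sum_E \gamma_E E$ over the prime divisors $E$ of $W$, the hypothesis $K_X+B \ge K_Y+C$ becomes, by the resolution-independent definition of the inequality recalled in Section~2, the coefficientwise inequality $\alpha_E \ge \gamma_E$ for every $E$. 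I would then record the dichotomy that drives the signs: if $E$ is not $\mu$-exceptional it is the strict transform of a prime divisor $D_X$ on $X$ and $\alpha_E$ equals the coefficient of $D_X$ in $B$, so $\alpha_E \ge 0$ since $B$ is effective; while terminality of $(X,B)$ forces $\alpha_E < 0$ for every $\mu$-exceptional $E$. The same dichotomy holds for $\gamma_E$ with respect to $(Y,C)$ and $\nu$.

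For surjectivity in codimension $1$ I would argue by contradiction. A prime divisor $D_Y$ on $Y$ failing to lie in the image of the divisor map is one whose strict transform $E = \nu_*^{-1}D_Y$ is $\mu$-exceptional. Then terminality of $(X,B)$ gives $\alpha_E < 0$, whereas $E$ being non-$\nu$-exceptional gives $\gamma_E \ge 0$, contradicting $\alpha_E \ge \gamma_E$. Hence the strict transform of every prime divisor on $Y$ is $\mu$-non-exceptional, which is precisely surjectivity in codimension $1$.

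For $f_*B \ge C$ I would fix a prime divisor $D_Y$ on $Y$ and let $E$ be its strict transform; by the previous step $E$ is $\mu$-non-exceptional, so $E = \mu_*^{-1}D_X$ for a prime divisor $D_X$ on $X$ with $f_*D_X = D_Y$. Then the coefficient of $D_Y$ in $f_*B$ is $\alpha_E$ and the coefficient of $D_Y$ in $C$ is $\gamma_E$, and $\alpha_E \ge \gamma_E$ gives the inequality at $D_Y$; ranging over all $D_Y$ yields $f_*B \ge C$. Finally, in the equality case $K_X+B = K_Y+C$ I would exploit symmetry: the equality is simultaneously $K_X+B \ge K_Y+C$ and $K_Y+C \ge K_X+B$, and both pairs are terminal, so the surjectivity conclusion applies to both $f$ and $f^{-1}$. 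On $W$ this means every $\nu$-non-exceptional $E$ is $\mu$-non-exceptional and conversely, so the divisor map is a bijection and $f$ is an isomorphism in codimension $1$.

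The only genuinely delicate point is the sign bookkeeping: one must apply terminality exactly to the exceptional divisors, where the coefficient $\alpha_E$ (resp. $\gamma_E$) is strictly negative in this convention, while using effectivity of the boundary to control the non-exceptional ones, and one must be sure the $\mathbf{R}$-divisor inequality is read off on a model high enough to compute all discrepancies — which is exactly what the stated independence of the defining inequality from the choice of $p,q$ guarantees.
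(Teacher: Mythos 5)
Your proposal is correct and follows essentially the same route as the paper: compare coefficients on a common resolution, use effectivity of the boundary on the non-exceptional side and terminality on the exceptional side to get the contradiction, and obtain the equality case by applying the first assertion to both $f$ and $f^{-1}$. The only difference is that you spell out the $f_*B \ge C$ step explicitly, which the paper leaves implicit.
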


\begin{proof}
Suppose that there is a prime divisor $C_1$ on $Y$ 
which is not the strict transform of a prime
divisor on $X$.
Let $c_1 \ge 0$ be the coefficient of $C_1$ in $C$.
Let $p: Z \to X$ and $q: Z \to Y$ be projective birational morphisms 
from a smooth variety, and let $D_1$ be the strict transform of $C_1$.
Since we have an equality $p^*(K_X+B) \ge q^*(K_Y+C)$, 
the coefficient of $D_1$ in $p^*(K_X+B)-K_Z$ 
should be non-negative, a contradiction to the fact that $(X,B)$ is terminal.
Therefore $f$ is surjective in codimension $1$.
If we apply the first assertion to $f$ and $f^{-1}$, then we obtain the second assertion.
\end{proof}

%%%%%%%%%%%%%%%%%%%%%%%%%%%%%%%%%%%%%%%%%%%%%
\section{Decomposition theorem for toric pairs}

We prove Theorem~\ref{decomposition} in this section.

First we prove a condition for the decomposition theorem that is also valid for the non-toric case.
Let $f: (X,B) \dashrightarrow (Y,C)$ be a birational map between projective $\mathbf{Q}$-factorial 
KLT pairs which is an isomorphic in codimension 1 and such that $K_X+B=K_Y+C$. 
Let $p:Z \to X$ and $q: Z \to Y$ be projective birational morphisms
such that $f \circ p = q$.
Let $\{l_{\lambda}\}$ be a set of curves on $X$ whose classes generate the 
cone of curves $\overline{NE}(X)$ as a closed convex cone.
Let $L_Y$ be an ample and effective divisor on $Y$ 
such that $L_Y-(K_Y+C)$ is also ample, and 
let $L_X = f_*^{-1}L_Y$ be its strict transform.

\begin{Lem}\label{condition}
Assume that, if $(L_X,l_{\lambda}) < 0$ for some $\lambda$, 
then $((K_X+B),l_{\lambda}) = 0$.
Then $f$ is decomposed into a sequence of flops.  
\end{Lem}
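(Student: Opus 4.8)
The plan is to realize $f$ as a directed minimal model program for the divisor $L_X$ and to show that, under the stated hypothesis, every step of this program is a flop. The first step is to upgrade the hypothesis from the chosen generators $l_\lambda$ to arbitrary extremal rays. Let $R \subset \overline{NE}(X)$ be an extremal ray with $(L_X, R) < 0$, and write a generator of $R$ as a nonnegative combination $\sum a_\lambda l_\lambda$. By extremality of $R$, every summand with $a_\lambda > 0$ again lies on $R$, so each contributing $l_\lambda$ is a positive multiple of the generator and is therefore itself $L_X$-negative; the hypothesis then gives $((K_X+B), l_\lambda) = 0$, whence $((K_X+B), R) = 0$. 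Thus every $L_X$-negative extremal ray is $(K_X+B)$-trivial.

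Next I would convert this ``$L_X$-MMP'' into an honest klt MMP with big boundary, so that existence and termination come from \cite{BCHM}. Since $L_Y-(K_Y+C)$ is ample, I choose a general effective $\mathbf{R}$-divisor $A_Y \sim_{\mathbf{R}} L_Y - (K_Y+C)$ so that $(Y, C+A_Y)$ is KLT and $K_Y+C+A_Y \sim_{\mathbf{R}} L_Y$; transporting by the codimension-one isomorphism $f$ (no exceptional divisors intervene) yields $A_X = f_*^{-1}A_Y$ with $(X, B+A_X)$ KLT and $K_X+B+A_X \sim_{\mathbf{R}} L_X$. Because $L_X$ is the strict transform of the ample divisor $L_Y$ under a map that is an isomorphism in codimension one, both $L_X$ and $L_X-(K_X+B)$ are big; in particular $A_X$, hence the boundary $B+A_X$, is big. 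By \cite{BCHM} one may then run and terminate the $(K_X+B+A_X)$-MMP, ending at a minimal model $\bar X$ on which $\bar L := K_{\bar X}+\bar B+\bar A$ is nef. Each step contracts a $(K_X+B+A_X)$-negative ray, which is the same as an $L_X$-negative ray since $K_X+B+A_X \equiv L_X$, and is therefore $(K_X+B)$-trivial by the previous paragraph; in particular each step is crepant for the original pair $(X,B)$.

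The point I expect to be the main obstacle is to rule out divisorial contractions, so that the program consists only of flops and lands exactly on $Y$. Here I would use that $Y$, carrying the ample divisor $L_Y$, is the ample model of $L_X$, so there is a birational morphism $\bar X \to Y$ contracting precisely the $\bar L$-trivial curves. Now compare numbers of prime divisors along $X \dashrightarrow \bar X \to Y$: a divisorial contraction strictly lowers the number of prime divisors, while a birational morphism such as $\bar X \to Y$ cannot raise it; since the total composite is $f$, which is an isomorphism in codimension one and hence induces a bijection between the sets of prime divisors, the numbers at $X$ and at $Y$ agree. Therefore all the intermediate inequalities are equalities: no divisorial contraction occurs, and $\bar X \to Y$ is small. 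Being a small projective birational morphism out of the $\mathbf{Q}$-factorial variety $\bar X$, it is an isomorphism, and one checks that the resulting composite $X \dashrightarrow \bar X \cong Y$ is the original map $f$.

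Finally I would assemble the conclusion. Each intermediate step $X_i \dashrightarrow X_{i+1}$ is small (a flip of the MMP), has relative Picard number one over its contraction $X_i \to W_i$, and is $(K_{X_i}+B_i)$-trivial by the ray analysis; tracking the boundary gives $B_{i+1}=(f_i)_*B_i$ and $K_{X_i}+B_i = K_{X_{i+1}}+B_{i+1}$ throughout. These are exactly the defining conditions of a flop in the sense of Section~2, so $f$ is decomposed into a sequence of flops. The two places that still require care are the choice of a general effective $A_X$ keeping $(X,B+A_X)$ KLT while maintaining $K_X+B+A_X \sim_{\mathbf{R}} L_X$, which is standard given the bigness of $L_X-(K_X+B)$, and the termination of the program, which is supplied by the minimal model theory for KLT pairs with big boundary in \cite{BCHM}.
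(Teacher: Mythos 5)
Your overall architecture is the same as the paper's: realize $f$ as a minimal model program every step of which is $L_X$-negative, hence $(K_X+B)$-trivial by the hypothesis, and get termination from the bigness of the boundary via \cite{BCHM}. But there is a genuine gap at exactly the point you dismiss as ``standard'': the claim that $A_X = f_*^{-1}A_Y$ gives a KLT pair $(X, B+A_X)$. KLT-ness does not transport through a map that is only an isomorphism in codimension one, and the parenthetical ``no exceptional divisors intervene'' is the opposite of what happens. On a common resolution $p: Z \to X$, $q: Z \to Y$ one has $p^*(K_X+B)=q^*(K_Y+C)$, but $p^*A_X - q^*A_Y$ is an \emph{effective} exceptional divisor (apply the negativity lemma: $q^*A_Y - p^*A_X$ is $p$-nef and its $p$-pushforward is $0$), so the discrepancies of $(X,B+A_X)$ are less than or equal to those of $(Y,C+A_Y)$, with strict drop along the flipping locus. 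Concretely, for the Atiyah flop with $B=C=0$: a general member $H_Y$ of an ample system meets the flopped curve $C_Y$ in $H_Y\cdot C_Y$ points, and its strict transform on $X$ contains the flopping curve $C_X$ with multiplicity $H_Y\cdot C_Y$; summing over the components of $A_Y$, the exceptional divisor $E$ of the common resolution appears in $p^*A_X$ with multiplicity $A_Y\cdot C_Y = L_Y\cdot C_Y$, so $a(E;X,A_X)=1-L_Y\cdot C_Y \le -1$ as soon as $L_Y\cdot C_Y \ge 2$. Since $L_Y$ is fixed in advance, you cannot avoid this. Nor does bigness of $L_X-(K_X+B)$ rescue the choice: bigness only yields a decomposition into an ample class plus a \emph{fixed} effective divisor $F$, and $(X, B+F+\text{general ample})$ need not be KLT.

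This failure is precisely why the paper perturbs by $\epsilon L_X$ with $0 < \epsilon \ll 1$: KLT-ness of $(X, B+\epsilon L_X)$ is automatic because $L_X$ is a fixed effective divisor and $\epsilon$ is small, at the cost that $K_X+B+\epsilon L_X$ is no longer numerically proportional to $L_X$, so its MMP is not automatically an $L_X$-MMP. The paper compensates by \emph{selecting}, at each step, an extremal ray on which $L$ is negative (such a ray exists because the $L$-negative part of the cone of curves lies in the $(K+B+\epsilon L)$-negative part), and by hypothesis these selected rays are $(K+B)$-trivial, so each selected step is a flop. Your remaining steps --- upgrading the hypothesis from the generators $l_\lambda$ to extremal rays, ruling out divisorial contractions by comparing divisors along the program, and identifying the end product with $Y$ as the ample model of $L_X$ --- are sound and actually make explicit some points the paper leaves implicit (one correction: it is the $\mathbf{Q}$-factoriality of the \emph{target} $Y$, not of the source $\bar X$, that forces a small projective birational morphism $\bar X \to Y$ to be an isomorphism; compare a small resolution of the quadric cone, whose source is smooth). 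But as written, the MMP your proof runs lives on a pair that may not be KLT, so the appeals to \cite{BCHM} for existence and termination of its steps do not apply, and the argument does not go through without reverting to the paper's small-$\epsilon$ perturbation and explicit ray selection.
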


\begin{proof}
We shall find suitable extremal rays
in order to find a path from $(X,B)$ to $(Y,C)$.

Assume first that $L_X$ is nef.
We claim that $L_X-(K_X+B)$ is also nef.
Otherwise, there exists a curve $l_{\lambda}$ such that 
$((L_X-(K_X+B)),l_{\lambda})<0$.
By the assumption, it follows that $((K_X+B),l_{\lambda})=0$.
Therefore $(L_X,l_{\lambda})<0$, a contradiction.

By the base point free theorem (\cite{KMM}), $L_X$ is semiample.
Since $f$ is an isomorphism in codimension $1$, 
the associated morphism coincides with $f$.
Since both $X$ and $Y$ are $\mathbf{Q}$-factorial, we conclude that $f$ is an
isomorphism.

Next assume that $L_X$ is not nef.
We take a small positive number $\epsilon$ such that 
the pair $(X,B+\epsilon L_X)$ is KLT.
We would like to run a minimal model program for this pair in order to
move from the pair $(X,B)$ to the pair $(Y,C)$.
Here we have to note that there may be extremal rays of $(X,B+\epsilon L_X)$ 
which do not come from the negativity of $L_X$.

Since $L_X$ is not nef, 
there exists a curve $l_{\lambda}$ such that $(L_X,l_{\lambda}) < 0$.
By the assumption, $((K_X+B),l_{\lambda})=0$ for such curve.
Therefore $K_X+B+\epsilon L_X$ is not nef.

The part of the closed cone of curves $\overline{NE}(X)$ 
where $L_X$ is negative is contained in the part where $K_X+B+\epsilon L_X$ 
is negative.
Thus there exists an extremal ray $R$ of $\overline{NE}(X)$ on which 
$L_X$ is negative.
Since $K_X+B$ is numerically trivial on $R$.
the flip of $R$ for the pair $(X,B+\epsilon L_X)$ is a flop 
for $(X,B)$.
Since $B+\epsilon L_X$ is big, this process terminates by \cite{BCHM}, and 
$f$ is decomposed into the flops.
\end{proof}

It is important note that $\{l_{\lambda}\}_{\lambda \in \Lambda}$ is not necessarily the 
set of all curves.

Now we consider the toric case.
We take $p: Z \to X$, $q: Z \to Y$ and $L_Y$ to be also toric.
Let $\Delta_X$ and $\Delta_Y$ be the simplicial fans corresponding to 
$X$ and $Y$, respectively, in the same vector space $N_{\mathbf{R}}$.
By assumption, the sets of primitive vectors on the edges of 
$\Delta_X$ and $\Delta_Y$ coincide.
Since $\overline{NE}(X)$ is generated by toric curves, 
Theorem~\ref{decomposition} is a consequence of the following:

\begin{Lem}
(1) Let $w$ be a wall in $\Delta_X$ and let $l_w$ be the corresponding curve 
on $X$.
Assume that $(L_X,l_w) \le 0$.
Then $w$ is not a wall in $\Delta_Y$.

(2) Let $w$ be a wall in $\Delta_X$ which does not belong to $\Delta_Y$.
Then $((K_X+B),l_w)=0$.
\end{Lem}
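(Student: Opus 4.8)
The plan is to translate both statements into the language of piecewise-linear support functions on $N_{\mathbf{R}}$ and to read off each intersection number $(D\cdot l_w)$ as the ``kink'' of the corresponding support function across the wall $w$. Write $K_X+B=\sum_\rho(b_\rho-1)D_\rho$ and let $\phi$ be the function on $N_{\mathbf{R}}$ that is linear on each maximal cone of $\Delta_X$ and satisfies $\phi(v_\rho)=b_\rho-1$ on every primitive generator $v_\rho$. Since $X$ and $Y$ share the same rays and $C=f_*B$, the analogous function built from $\Delta_Y$ takes the same values on all rays; the hypothesis $K_X+B=K_Y+C$ says exactly that the two pullbacks to a common toric resolution agree, which for support functions means these two functions are equal, so there is a \emph{single} function $\phi$ that is piecewise linear with respect to both $\Delta_X$ and $\Delta_Y$. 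Likewise let $\psi_X,\psi_Y$ be the support functions of $L_X,L_Y$; they agree on every ray (as $L_X=f_*^{-1}L_Y$), but $\psi_Y$ is strictly convex because $L_Y$ is ample. Fixing a wall $w=\mathrm{cone}(v_2,\dots,v_n)$ with adjacent maximal cones $\sigma,\sigma'\in\Delta_X$ and ``cap'' rays $v_1\in\sigma$, $v_{n+1}\in\sigma'$, write the wall relation $\sum_i\lambda_i v_i=0$ normalized so that $\lambda_1,\lambda_{n+1}>0$. Then for any $\mathbf{R}$-divisor $D$ with support function $\chi_D$ one has $(D\cdot l_w)=\kappa_w\sum_i\lambda_i\,\chi_D(v_i)$ with $\kappa_w>0$, and this quantity is precisely the discrepancy between $\chi_D(v_{n+1})$ and the linear extension of $\chi_D|_\sigma$, i.e.\ the kink of $\chi_D$ across $w$.

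For part (2) I would argue that a function linear on every cone of $\Delta_Y$ cannot kink across a wall $w\in\Delta_X\setminus\Delta_Y$. The geometric input is that $\mathrm{relint}(w)$ must meet a cone $\gamma\in\Delta_Y$ whose span is transverse to $H_w=\mathrm{span}(w)$: since $\Delta_X$ and $\Delta_Y$ have the same rays, no ray can lie in $\mathrm{relint}(w)$, so if $\mathrm{relint}(w)$ were contained in the codimension-one skeleton of $\Delta_Y$ it would be tiled by $\Delta_Y$-cones lying inside $H_w$, and the absence of interior rays forces this tiling to be trivial, giving $w\in\Delta_Y$, a contradiction. Choosing $x_0\in\mathrm{relint}(w)\cap\mathrm{relint}(\gamma)$ with $\gamma$ transverse, a neighbourhood of $x_0$ is covered by $\sigma\cup\sigma'$, so $\phi$ agrees with $m_\sigma$ on the open piece $\gamma\cap\{\ell>0\}$ and with $m_{\sigma'}$ on $\gamma\cap\{\ell<0\}$ near $x_0$, where $\ell$ is the functional defining $H_w$. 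Linearity of $\phi$ on $\gamma$ forces $m_\sigma=m_{\sigma'}$ on $\mathrm{span}(\gamma)$; as $m_\sigma-m_{\sigma'}$ is a multiple of $\ell$ and $\ell\neq 0$ on the transverse $\mathrm{span}(\gamma)$, the kink vanishes, i.e.\ $((K_X+B)\cdot l_w)=0$.

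For part (1) I would prove the contrapositive: if $w\in\Delta_Y$, then $(L_X\cdot l_w)>0$. Because $\psi_X$ and $\psi_Y$ agree on rays, $(L_X\cdot l_w)=\kappa_w\sum_i\lambda_i\psi_Y(v_i)$. Writing the wall rays' coefficients as $\mu_i=-\lambda_i$ and splitting into signs gives the identity $\lambda_1 v_1+\lambda_{n+1}v_{n+1}+\sum_{\mu_i<0}|\mu_i|v_i=\sum_{\mu_i>0}\mu_i v_i=:Q$. The right-hand expression uses only rays of the single cone $w\in\Delta_Y$, so strict convexity of $\psi_Y$ gives the equality $\psi_Y(Q)=\sum_{\mu_i>0}\mu_i\psi_Y(v_i)$, whereas the left-hand expression gives $\psi_Y(Q)\le\lambda_1\psi_Y(v_1)+\lambda_{n+1}\psi_Y(v_{n+1})+\sum_{\mu_i<0}|\mu_i|\psi_Y(v_i)$; subtracting yields $\sum_i\lambda_i\psi_Y(v_i)\ge 0$. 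The heart of the matter is strictness. Equality would force $v_1,v_{n+1}$ and the rays with $\mu_i<0$ to lie in a common cone $\gamma\in\Delta_Y$, so the face $\gamma'$ of $\gamma$ they span (which is simplicial) contains $Q$ in its relative interior; but $Q$ also lies in $w\in\Delta_Y$, and $\gamma'\cap w$ being a common face then forces $\gamma'\subseteq w$, contradicting $v_1\notin H_w$. Hence the inequality is strict, $(L_X\cdot l_w)>0$, contradicting $(L_X\cdot l_w)\le 0$; therefore $w\notin\Delta_Y$.

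Combining the two parts with Lemma~\ref{condition}, applied to the family $\{l_w\}$ of wall curves generating $\overline{NE}(X)$, then proves Theorem~\ref{decomposition}: whenever $(L_X\cdot l_w)<0$, part (1) gives $w\notin\Delta_Y$ and part (2) gives $((K_X+B)\cdot l_w)=0$, which is exactly the hypothesis needed to decompose $f$ into flops. I expect the main obstacle to be the strictness in part (1): the two fans may attach different cap rays to a common wall, so one cannot simply identify $(L_X\cdot l_w)$ with the $Y$-side intersection number, and positivity must instead be extracted from strict convexity together with the face-theoretic argument above. A secondary technical point is making the covering argument in part (2) rigorous, in particular verifying that no ray of the common ray set lies in the relative interior of a wall.
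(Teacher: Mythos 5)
Your proof is correct, and it splits into a part that mirrors the paper and a part that does not. Part (2) is essentially the paper's argument: the paper likewise uses that $K_X+B=K_Y+C$ makes the single function $h$ (your $\phi$) piecewise linear with respect to both fans, and then asserts that ``there is a point inside $w$ which is an interior point of a simplex in $\Delta_Y$,'' from which linearity across $w$ and the vanishing $((K_X+B),l_w)=0$ follow; your transversality formulation with the local identity $m_\sigma=m_{\sigma'}$ is the same mechanism. Part (1), however, takes a genuinely different route. The paper argues geometrically: assuming $w\in\Delta_Y$, it takes a general point $P$ in the relative interior of $w$ and the two cap vertices of the adjacent $\Delta_X$-cones, observes that $g_X$ (your $\psi_X$) is linear on the two segments joining $P$ to these vertices while $g_Y$ (your $\psi_Y$) is convex along them with the same endpoint values, hence $g_X\ge g_Y$ on the segments with equality at $P$; the strict kink of $g_Y$ across $w$ coming from ampleness of $L_Y$ then forces a strict kink of $g_X$ across $w$, i.e.\ $(L_X,l_w)>0$, the desired contradiction. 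You instead never choose a point or a path: you split the wall relation $\sum_i\lambda_i v_i=0$ by signs, apply strict subadditivity of the ample support function $\psi_Y$ to the two representations of $Q$, and dispose of the equality case by the face-theoretic argument ($Q$ lies in the relative interior of the simplicial face $\gamma'$ and in the cone $w$ of $\Delta_Y$, forcing $\gamma'\subseteq w$ and hence $v_1\in H_w$, absurd). Both proofs consume the same inputs (agreement of the support functions on the common rays, strict convexity of $\psi_Y$ with respect to $\Delta_Y$, the fan axioms); yours is purely combinatorial and avoids the genericity argument the paper needs for its segments, at the price of a more delicate analysis of the equality case, while the paper's path comparison is shorter and visibly geometric.

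One point to tighten: in part (2), the phrase ``the absence of interior rays forces this tiling to be trivial'' is not yet a proof, since a priori a wall of $\Delta_Y$ lying in $H_w$ could strictly contain $w$ with its own rays outside $w$; to exclude this you need the fan axiom once more (a common ray contained in a cone of $\Delta_Y$ is a face of it, so the rays of $w$ would have to be rays of that wall, forcing it to equal $w$). You flag this yourself, and the paper offers no justification at all for the corresponding assertion, so you are not below the paper's standard of rigor here; but this is exactly the step where the hypothesis $w\notin\Delta_Y$ is used, so it is worth writing out.
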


\begin{proof}
(1) Since $f$ is an isomorphism in codimension $1$, 
the set of primitive vectors $\{v_i\}$ on the edges of $\Delta_X$ and $\Delta_Y$ coincide.
Let $g_X$ and $g_Y$ be functions on $N_{\mathbf{R}}$ corresponding to the divisors
$L_X$ and $L_Y$, respectively.
$g_X$ and $g_Y$ have the same values at the $v_i$, 
and are linear inside each simplex 
belonging to $\Delta_X$ and $\Delta_Y$, respectively.
Since $L_Y$ is ample, $g_Y$ is convex along walls of $\Delta_Y$.

Assuming that $w$ is also a wall in $\Delta_Y$, 
we shall derive a contradiction.
Under this assumption, the values of $g_X$ and $g_Y$ are equal on $w$.

Let $v_1,v_2$ be two vertexes adjacent to $w$ in $\Delta_X$.
We take a general point $P$ on $w$ such that the line segments $I_i$ 
connecting $P$ to $v_i$ for $i=1,2$ pass only the interiors of simplexes and 
walls of $\Delta_Y$.
Then the function $g_Y$ is convex along the $I_i$.

Since $g_X$ is linear on the $I_i$ and coincides with $g_Y$ at the end points,
we have an inequality $g_X \ge g_Y$ on $I_1 \cup I_2$.
On the other hand, $g_X = g_Y$ on $w$.
Since $g_Y$ is convex along $w$, we conclude that $g_X$ is also convex along 
$w$, hence $(L_X,l_w) > 0$, a contradiction.

(2) Let $D_i$ be the the prime divisors on $X$ corresponding the $v_i$.
We write $B = \sum d_iD_i$, and let $h$ be the function on the vector space 
$N_{\mathbf{R}}$ which takes values $1-d_i$ at the $v_i$ and linear 
inside each simplex of $\Delta_X$.
Since $K_X+B=K_Y+C$, $h$ is also linear inside each simplex of $\Delta_Y$.
There is a point inside $w$ which is an interior point of a simplex in 
$\Delta_Y$.
Then $h$ is linear across $w$.
Therefore $((K_X+B),l_w)=0$.
\end{proof}

\begin{Rem}
The relative version is similarly proved.
We need to assume that $X$ and $Y$ are projective and toric over a toric variety $S$ and 
$f$ is a toric birational map over $S$.
We only consider curves relative over $S$, i.e., those mapped to single points on $S$.
\end{Rem}

If $\dim X = 3$, then the condition of Lemma~\ref{condition} is easily verified
even in the non-toric case:

\begin{Prop}\label{dim3}
Assume that $\dim X = 3$.
Then any $K$-equivalent birational map between projective $\mathbf{Q}$-factorial  KLT pairs
$f: (X,B) \dashrightarrow (Y,C)$ which is an isomorphism in codimension $1$
is decomposed into a sequence of flops.
\end{Prop}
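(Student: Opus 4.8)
The plan is to verify the numerical hypothesis of Lemma~\ref{condition} and then invoke that lemma. Concretely, choose $L_Y$ ample and effective on $Y$ with $L_Y-(K_Y+C)$ ample, set $L_X=f_*^{-1}L_Y$, and fix a common resolution $p\colon Z\to X$, $q\colon Z\to Y$ with $q=f\circ p$. I must show that every curve $l$ on $X$ with $(L_X,l)<0$ satisfies $((K_X+B),l)=0$; once this holds for the generating set of $\overline{NE}(X)$ consisting of all curves, Lemma~\ref{condition} produces the factorization into flops. The one algebraic input I will use repeatedly is that the $K$-equivalence means precisely $p^*(K_X+B)=q^*(K_Y+C)$.

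First I would locate the $L_X$-negative curves. Put $\Theta:=p^*L_X-q^*L_Y$. Since $f$ is an isomorphism in codimension $1$, the strict transforms of $L_X$ and of $L_Y$ on $Z$ coincide and the $p$-exceptional and $q$-exceptional prime divisors agree, so $\Theta$ is supported on exceptional divisors and $p_*\Theta=0$. Moreover $-\Theta=q^*L_Y-p^*L_X$ is $p$-nef, because $q^*L_Y$ is nef and $p^*L_X$ is numerically trivial on curves contracted by $p$. The negativity lemma (\cite{KMM}) then gives $\Theta\ge 0$, i.e.\ $p^*L_X\ge q^*L_Y$. Consequently, if a curve $l$ is \emph{not} contained in the indeterminacy locus $\mathrm{Indet}(f)$, its strict transform $\tilde l$ avoids $\mathrm{Supp}(\Theta)$ and $(L_X,l)=(q^*L_Y,\tilde l)+(\Theta,\tilde l)\ge 0$. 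Hence every $L_X$-negative curve lies in $\mathrm{Indet}(f)$, which --- and here is the only place the hypothesis $\dim X=3$ is used --- has codimension at least $2$ and is therefore a finite union of curves and points.

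It remains to show $((K_X+B),l)=0$ for such a flopping curve $l\subset\mathrm{Indet}(f)$. As $l$ lies in the indeterminacy locus, the generic $p$-fibre over $l$ is positive-dimensional, so $p^{-1}(l)$ contains a two-dimensional component $E$ dominating $l$; being a prime divisor contracted by $p$, the surface $E$ is also contracted by $q$, so $\dim q(E)\le 1$. I would then exhibit a curve $\tilde l\subseteq E$ with $q(\tilde l)$ a point and $p_*\tilde l=d\,l$ for some $d\ge 1$: if $q(E)$ is a point, any multisection of $p|_E\colon E\to l$ works, while if $q(E)$ is a curve one takes a general fibre of $q|_E$, which maps finitely onto $l$ under $p$ precisely because the two fibrations $E\to l$ and $E\to q(E)$ are distinct (were they equal, $f$ would be defined along $l$). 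For this $\tilde l$ one has $q_*\tilde l=0$, and the $K$-equivalence yields $d\,((K_X+B),l)=(p^*(K_X+B),\tilde l)=(q^*(K_Y+C),\tilde l)=((K_Y+C),q_*\tilde l)=0$, so $((K_X+B),l)=0$.

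The step I expect to be the main obstacle is the last one: producing the $q$-contracted curve $\tilde l$ dominating $l$. This rests on the geometry of the exceptional surface $E$ lying over the one-dimensional centre $l$, and it is exactly this feature that confines the argument to dimension $3$ --- the exceptional fibre over a flopping curve is a surface carrying two distinct fibrations, whereas in higher dimensions $L_X$-negative curves can move inside a higher-dimensional indeterminacy locus and the same clean dichotomy fails. Granting this, the hypothesis of Lemma~\ref{condition} is verified for all curves, and that lemma completes the proof.
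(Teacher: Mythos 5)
Your strategy---verify the hypothesis of Lemma~\ref{condition} by showing that in dimension $3$ every $L_X$-negative curve is dominated by a $q$-contracted curve, then apply the lemma---is exactly the paper's own proof (which is a three-sentence sketch of precisely these two claims), and your negativity-lemma computation and final intersection-number argument are correct. But two steps do not hold up as written. The first is the inference ``if $l\not\subset\mathrm{Indet}(f)$ then the strict transform of $l$ is not contained in $\mathrm{Supp}(\Theta)$.'' What your $\Theta$-argument actually shows is that $L_X$-negative curves lie in the locus where $f$ fails to be a local \emph{isomorphism}, which is a priori larger than the indeterminacy locus: $f$ can be defined at the generic point $\eta$ of $l$ and still satisfy $\mathcal{O}_{Y,f(\eta)}\subsetneq\mathcal{O}_{X,\eta}$, in which case divisors of $Z$ lying over $\eta$ may have different coefficients in $p^*L_X$ and $q^*L_Y$, so $\Theta$ can be supported over $l$ and your inequality breaks. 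What rescues the claim is a standard consequence of Zariski's main theorem together with the hypothesis that $f$ induces a bijection on prime divisors: if $f$ were defined but not a local isomorphism at the codimension-$2$ point $\eta$, ZMT would give a height-one prime of $\mathcal{O}_{X,\eta}$ lying over $\mathfrak{m}_{f(\eta)}$, i.e.\ a prime divisor of $X$ through $l$ whose center on $Y$ has codimension at least $2$, contradicting the divisor bijection. You must either invoke this fact or run step 1 with the non-isomorphism locus in place of $\mathrm{Indet}(f)$.

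The second gap is in the dichotomy on the exceptional surface: you fix \emph{one} two-dimensional component $E$ of $p^{-1}(l)$ dominating $l$ and assert ``were the two fibrations on $E$ equal, $f$ would be defined along $l$.'' For a single component this implication is false: $p^{-1}(l)$ may contain several such components, and the graph fiber over $\eta$ is the union of the $q$-images of the generic $p$-fibers of \emph{all} of them; your chosen $E$ can perfectly well have coinciding fibrations while another component carries the flop geometry and keeps $f$ undefined at $\eta$. For such a bad choice of $E$ your construction (a general fiber of $q|_E$) genuinely fails, since that fiber is $p$-contracted. The repair is to quantify over all components: if \emph{every} two-dimensional component of $p^{-1}(l)$ dominating $l$ had its generic $p$-fiber contracted by $q$, then $q(p^{-1}(\eta))$ would be finite, hence a single point by connectedness of the fibers of $Z\to X$ over the normal variety $X$, so $f$ would be defined at $\eta$---contradiction; therefore \emph{some} component either has $q$-image a point or has transverse fibrations, and on that component your curve $\tilde l$ with $q_*\tilde l=0$ and $p_*\tilde l=d\,l$ exists. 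With these two repairs (both standard, both resting on ZMT plus the codimension-one-isomorphism hypothesis), your argument becomes a correct and usefully detailed version of the paper's sketch.
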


\begin{proof}
Assume that $(L_X,l) \le 0$ for a curve $l$.
Since $\dim X = 3$, there are only finitely many such curves $l$.
Then $l$ is the image of a fiber of $q$ by $p$.
Therefore we have $((K_X+B),l)=0$.
\end{proof}

%%%%%%%%%%%%%%%%%%%%%%%%%%%%%%%%%%%%%%%%%%%%%%%%%%%%%%%%%%%%%
\section{Derived McKay correspondence}

We prove Theorems~\ref{abelian} and \ref{GL2} as well as Theorem~\ref{relative}~(a) 
in this section.

\begin{proof}[proof of Theorem~\ref{abelian}]
Let $B$ be a $\mathbf{Q}$-divisor on $X$ such that $p^*(K_X+B)=K_{\mathbf{A}}$.
Since the action of $G$ is diagonalizable, the pair $(X,B)$ is toric.
We have $D^b(\text{coh}([\mathbf{A}/G]) \cong D^b(\text{coh}(\tilde X))$ for the 
smooth Deligne-Mumford stack $\tilde X$ associated to the pair $(X,B)$.

There exists a toric relative minimal model of $X$.
Any other relative minimal model is obtained by a sequence of flops from the toric 
relative minimal model by \cite{flop}. 
Since the extremal rays are toric, the sequence is automatically toric, 
hence any relative minimal model is again toric.
By Theorem~\ref{relative}, it is sufficient to prove the theorem for a particular relative minimal model.

Since $X$ is $\mathbf{Q}$-factorial and $(X,B)$ is KLT, we can construct a sequence 
toric birational morphisms
\[
(X_t,B_t) \to (X_{t_1},B_{t-1}) \to \dots \to (X_0,B_0)=(X,B)
\]
which satisfy the following conditions:

(1) Each step $f_i: (X_i,B_i) \to (X_{i-1},B_{i-1})$ is a divisorial contraction such that 
$B_i = f_{i*}^{-1}B_{i-1}$ and $K_{X_i}+B_i \le K_{X_{i-1}}+B_{i-1}$ for $1 \le i \le t$.

(2) The pair $(X_t,B_t)$ is terminal.

Next we run a toric minimal model program for $X_t$ over $X$ 
to obtain a sequence of birational maps
\[
X_t=Y_0 \dashrightarrow Y_1 \dashrightarrow \dots \dashrightarrow Y_s=Y
\]  
consisting of divisorial contractions and flips such that $K_Y$ is nef over $X$. 
We have $K_{Y_{j-1}} > K_{Y_j}$ for $1 \le j \le s$.

We apply Theorem~\ref{relative} and Corollary~\ref{combine} 
to each step of the above sequences to obtain our result. 
\end{proof}

\begin{proof}[proof of  Theorem~\ref{GL2}]
Let $H$ be the normal subgroup of $G$ defined by $H = G \cap SL(2,\mathbf{C})$.
The residue group is a cyclic group $\mathbf{Z}_r=G/H$ for $r=[G:H]$.
By \cite{BKR}, 
a component of the Hilbert scheme of $H$-invariant subschemes
of $\mathbf{A}$ gives a crepant resolution $f: M \to \mathbf{A}/H$, and 
there is a derived equivalence $\Phi:D^b(\text{coh}(M)) \to D^b(\text{coh}^H(\mathbf{A}))
= D^b(\text{coh}([\mathbf{A}/H]))$.
More precisely, $M$ is the moduli space of closed subschemes
$P \subset \mathbf{A}$ which is $H$-invariant
and such that $\text{length }\mathcal{O}_P = \#H$.
For a generic point $[P] \in M$, $P$ is a reduced subscheme, 
and corresponds to a generic orbit of $H$ in $\mathbf{A}$.
There exists a universal subscheme 
$W \subset M\times \mathbf{A}$ with projections 
$p: W \to M$ and $q: W \to \mathbf{A}$ such that the 
functor $\Phi=q_*p^*$ gives the equivalence.

For $[P] \in M$ and $g \in G$, $gP$ is also $H$-invariant, since $H$ is a normal subgroup.
Hence $[gP] \in M$, and $G$ acts on $M$ such that its subgroup $H$ acts trivially.
The equivariant derived category $D^b(\text{coh}^{\mathbf{Z}_r}(M))$ is identified with 
the category where the objects are $G$-equivariant complexes on which $H$ acts 
trivially, and the morphisms are $G$-equivariant morphisms.

$G$ acts on the product $M \times \mathbf{A}$ diagonally and preserves the subscheme $W$.
We claim that the functor
\[
\Phi'= q_*p^*: D^b(\text{coh}^{\mathbf{Z}_r}(M)) \to D^b(\text{coh}^G(\mathbf{A})).
\]
is an equivalence.
Indeed if $a,b$ are $\mathbf{Z}_r$-equivariant objects, then
the isomorphism 
\[
\text{Hom}_M(a,b) \cong \text{Hom}_{\mathbf{A}}(\Phi(a),\Phi(b))^H
\]
implies an isomorphism
\[
\text{Hom}_M(a,b)^{\mathbf{Z}_r} \cong 
(\text{Hom}_{\mathbf{A}}(\Phi(a),\Phi(b))^H)^{\mathbf{Z}_r}
= \text{Hom}_{\mathbf{A}}(\Phi(a),\Phi(b))^G. 
\]

Let $g: M \to M/\mathbf{Z}_r$ be the map to the quotient space.
We define a boundary divisor $B_{M/\mathbf{Z}_r}$ on $M/\mathbf{Z}_r$ by 
$g^*(K_{M/\mathbf{Z}_r}+B_{M/\mathbf{Z}_r})=K_M$. 
Since $M$ is smooth, $M/\mathbf{Z}_r$ has only cyclic quotient singularities.
Let $h: Y' \to M/\mathbf{Z}_r$ be the minimal resolution, and 
let $k: Y' \to Y$ be the contraction morphism to the minimal resolution of $\mathbf{A}/G$.
The former is a toroidal morphism, and decomposed into toroidal divisorial contractions.
The latter is a composition of divisorial 
contractions of $(-1)$-curves, which are toroidal too.

The canonical divisors satisfy the following inequalities:
\[
K_Y \le K_{Y'} \le K_{M/\mathbf{Z}_r} \le K_{M/\mathbf{Z}_r}+B_{M/\mathbf{Z}_r}
=K_{\mathbf{A}/G}.
\]
By Theorem~\ref{relative}, we obtain fully faithful embeddings
\[
D^b(\text{coh}(\tilde Y)) \to D^b(\text{coh}(\tilde Y')) \to D^b(\text{coh}([M/\mathbf{Z}_r])) 
= D^b(\text{coh}([\mathbf{A}/G]))
\]
whose semi-orthogonal complements are generated by the derived categories of subvarieties of 
$X=\mathbf{A}/G$.
\end{proof}

\begin{proof}[proof of Theorem~\ref{relative}~(a)]
It is sufficient to consider the following situation:
$X=Y$ is an affine toric variety corresponding to a simplicial cone 
$\sigma \subset N_{\mathbf{R}}$ generated by primitive vectors $v_1,\dots,v_n \in N$.
Let $B_i$ be prime divisors on $X$ corresponding to the $v_i$, and let 
$B = \sum (1-1/r_i)B_i$ and $C=\sum (1-1/s_i)B_i$, where $r_1 > s_1$ and
$r_i=s_i$ for $i=2,\dots,n$.
Let $t_1=\text{LCM}(r_1,s_1)$ and $t_i=r_i$ for $i=2,\dots,n$.

The stacks $\tilde X$,  $\tilde Y$ and $\tilde W$ are defined by the sublattices 
$N_X$, $N_Y$ and $N_W$ 
of $N$ generated by the $r_iv_i$, $s_iv_i$ and $t_iv_i$, respectively. 
Let $\tilde B_i^X$, $\tilde B_i^Y$ and $\tilde B_i^W$ be prime divisors 
on $\tilde X$,  $\tilde Y$ and $\tilde W$ over $B_i$, respectively.
They correspond primitive vectors $r_iv_i$, $s_iv_i$ and $t_iv_i$, respectively.
Let $p: \tilde W \to \tilde X$ and $q: \tilde W \to \tilde Y$ be the natural morphisms.
Then the fully faithful functor $\Phi: D^b(\text{coh}(\tilde Y)) \to D^b(\text{coh}(\tilde X))$ is 
constructed as $\Phi=p_*q^*$ in \cite{log crepant}~Theorem~4.2 (1).
We have 
\[
\Phi(\mathcal{O}_{\tilde Y}(k\tilde B_1^Y)) = 
\mathcal{O}_{\tilde X}(\llcorner kr_1/s_1 \lrcorner \tilde B_1^X)
\]
for $k = 1,\dots, s_1-1$.

We consider the semi-orthogonal complement.
It is generated by the sheaves
$\mathcal{O}_{\tilde B_1^X}(l\tilde B_1^X)$ for $1 \le l \le r_1-1$ such that
$l \ne \llcorner kr_1/s_1 \lrcorner$ for any $k$.
We have 
\[
R\text{Hom}_{\tilde X}(\mathcal{O}_{\tilde X}(\llcorner kr_1/s_1 \lrcorner \tilde B_1^X),
\mathcal{O}_{\tilde B_1^X}(l\tilde B_1^X))=0
\]
for such $k,l$, and 
\[
R\text{Hom}_{\tilde X}(\mathcal{O}_{\tilde B_1^X}(l\tilde B_1^X), 
\mathcal{O}_{\tilde B_1^X}(l'\tilde B_1^X))=0
\]
for such $l,l'$ with $l < l'$.

We define a toric boundary $D$ on $B_1$ 
from the pair $(X,B)$ as follows (cf. \cite{toric}~\S5).
Let $\bar N = N/\mathbf{Z}v_1$, and write $v_i + \mathbf{Z}v_1 = d_i \bar v_i \in \bar N$ for 
primitive vectors $\bar v_i$ ($i=2,\dots,n$).
Then define 
$D= \sum_{i=2}^n (1-1/d_ir_i)D_i$ for $D_i=B_1 \cap B_i$. 
Let $\tilde B_1$ be the associated smooth Deligne-Mumford stack above $(B_1,D)$.
Let $\tilde B_1'$ be the normalization of the fiber product $\tilde B_1 \times_X \tilde X$ with 
natural morphisms 
$p_1: \tilde B_1' \to \tilde B_1$ and $p_2: \tilde B_1' \to \tilde X$.
Then as in \cite{toric}, the semi-orthogonal complement is generated by 
subcategories 
\[
p_{2*}p_1^*D^b(\text{coh}(\tilde B_1)) \otimes \mathcal{O}_{\tilde X}(l\tilde B_1^X)
\]
$1 \le l \le r_1-1$ such that $l \ne \llcorner kr_1/s_1 \lrcorner$ for any $k$.
Moreover, there is no morphism between objects belonging to different $k$'s.
Therefore we conclude the proof.
\end{proof}

Graduate School of Mathematical Sciences, University of Tokyo,

Komaba, Meguro, Tokyo, 153-8914, Japan

kawamata@ms.u-tokyo.ac.jp

\end{document}